\documentclass[11pt,reqno]{amsart}

\usepackage{amsmath,amssymb,amsthm,mathtools}
\usepackage{microtype}
\usepackage[colorlinks=true,linkcolor=blue,citecolor=blue,urlcolor=blue]{hyperref}

\newtheorem{theorem}{Theorem}[section]
\newtheorem{lemma}[theorem]{Lemma}

\newtheorem{corollary}[theorem]{Corollary}

\title[On variants of P\'olya's conjecture]{On variants of P\'olya's conjecture}
\author{Songlin Han}
\email{han.songlin.638@s.kyushu-u.ac.jp}
\address{Kyushu University, Fukuoka, Japan}
\keywords{Liouville function, Riemann hypothesis, Riesz mean, sign criterion, Riemann zeta function, explicit formula}
\date{\today}

\begin{document}

\begin{abstract}
    In this paper, we study a Riesz-type weighted sum of the Liouville function,
    \[
    f(x):=-\sum_{n\le x}\frac{\lambda(n)\log n}{\sqrt n}\log\frac{x}{n}
    \]
    for sufficiently large $x$.
    Motivated by a recent research on the sign criteria for the Riemann Hypothesis arising from weighted prime-counting functions, we investigate the sign behavior of $f(x)$ and its relation to the zeros of the Riemann zeta function. We first prove that if $f(x)$ is non-negative for all sufficiently large $x$, then the Riemann Hypothesis holds. The proof is based on the Mellin transform of $f$ and the analytic properties of $\frac{\zeta(2s)}{\zeta(s)}$.
    Conversely, assuming the Riemann Hypothesis, the Simple Zero Conjecture, and an absolute convergence condition involving the nontrivial zeta zeros, we derive an explicit formula for $f(x)$ in terms of these zeros. In particular, we show that
    \[
    f(x)\sim \frac{(\log x)^3}{12|\zeta(\frac12)|}\qquad (x\to\infty).    
    \]
    Consequently, we show that under these hypotheses, $f(x)$ is eventually positive.
\end{abstract}

\maketitle

\section{Introduction}
Let $n$ be a positive integer and $n=p_1^{a_1} \cdots p_k^{a_k}$ be its prime factorization and $\Omega(n) := a_1+\cdots+a_k$ be the prime divisor function.
Define $\lambda(n) := (-1)^{\Omega(n)}$ be the Liouville function.
P\'olya \cite{Polya} conjectured
\[
  \sum_{n\le m}\lambda(n)\le 0
\]
for all positive $m\ge 2$.

The conjecture has been disproved by Haselgrove \cite{Haselgrove}, and two explicit counterexamples were found by \cite{Lehman, Tanaka}.
Although it is unknown if $\sum_{n\le m}\lambda(n)$ changes sign infinitely often, we still can study the summation of $\lambda$ with different types of weight.
Mossinghoff-Trudgian \cite{MOSSINGHOFF_TRUDGIAN_2012} studied the sum
\[
  L_{\alpha}(x):=\sum_{n\le x}\frac{\lambda(n)}{n^\alpha},\qquad 0\leq \alpha\leq 1.
\]
They investigated the sign behaviour of these sums, its connection with the Riemann hypothesis (RH) and the zeros of the Riemann zeta function $\zeta(s)$,
and identified the case $\alpha=1/2$ as a particularly natural value for persistent sign constancy.
Humphries \cite{Humphries_2013} subsequently studied the distribution behaviour of $L_{\alpha}(x)$.
Under the Riemann hypothesis, the linear independence hypothesis for the zeros of $\zeta(s)$, and a bound on negative discrete moments of $\zeta(s)$,
he proved the existence of limiting logarithmic distributions for suitable normalizations of $L_{\alpha}(x)$ when $0\leq \alpha<1/2$,
showing in particular a negative bias but a positive logarithmic density of positive values.
In the borderline case $\alpha=1/2$,
he conditionally proved that $L_{1/2}(x)<0$ outside a set of logarithmic density zero.
Alkan \cite{ALKAN202190} further developed this by studying the same family of weighted Liouville sums. He showed that, for $1/2<\alpha<1$,
some sign conditions for $L_{\alpha}(x)$ and for its first two derivatives with respect to $\alpha$,
\[
  L_{\alpha}'(x)=-\sum_{n\le x}\frac{\lambda(n)\log n}{n^{\alpha}},\qquad
  L_{\alpha}''(x)=\sum_{n\le x}\frac{\lambda(n)(\log n)^2}{n^{\alpha}},
\]
give criteria equivalent to the Riemann hypothesis.
Recently, Dixit-Chorge \cite{Dixit_Chorge_2026} derived Vorono\"i summation formulas for several arithmetic functions, including the Liouville function $\lambda(n)$,
the M\"obius function $\mu(n)$, and the square of the divisor function.
In the case of $\lambda(n)$,
their formula expresses Liouville sums in terms of a main integral term, a dual oscillatory sum, and contributions from the non-trivial zeros of $\zeta(s)$.
As applications, they obtained Ramanujan--Guinand and Cohen-type identities, as well as conditional oscillation results for the associated Riesz sums.
Their work is therefore concerned primarily with Vorono\"i-type transformations and the oscillatory behaviour of standard Riesz means, rather than sign criteria for the Riemann hypothesis.

\bigskip
\bigskip

The prime number theorem for arithmetic progressions states that the number of primes $3 \pmod 4$ is asymptotically equivalent to the number of primes $1 \pmod 4$.
Chebyshev, however, observed in 1853 that there appear to be more primes congruent to $3$ modulo $4$ than to $1$ modulo $4$, i.e., $\pi(x;4,3)\ge \pi(x;4,1)$ for all $x$.
The phenomenon of Chebyshev's observation is known as the Chebyshev's bias.
Indeed, numerical experiment shows that $\pi(x;4,3)\ge \pi(x;4,1)$ for all $x<26861$.
In spite of that, Leech \cite{MR83001} found that $\pi(x;4,3)-\pi(x;4,1)=-1$ for $x=26861$ as a counterexample.
Rubinstein-Sarnak \cite{RS} showed, under the Generalized Riemann hypothesis and a linear independence hypothesis for the zeros of Dirichlet $L$-functions, that this bias persists in a logarithmic density sense.
In particular, the set of $x$ for which $\pi(x;4,3)>\pi(x;4,1)$ has logarithmic density approximately $0.9959$.
Different types of this problem have been studied by \cite{Fujii, MR272729, MR3925757}.
In particular, Suzuki \cite{Suzuki} recently reformulated the Chebyshev-type bias as a criterion for the Riemann hypothesis in terms of the Riesz mean of the von Mangoldt function
\[
  f_\Lambda(x):=\sum_{n\le x}\frac{\Lambda(n)}{\sqrt n}\log\frac{x}{n}.
\]
The Mellin transform of $-f_\Lambda(x)$ involves
\[
  \frac{1}{(s-\frac{1}{2})^2}\frac{\zeta'(s)}{\zeta(s)},
\]
and the pole of $-\zeta'/\zeta$ at $s=1$ produces the main term $4\sqrt x$ in Suzuki's criterion.
Inspired by this, in this paper, we establish similar theorems for a weighted sum of the Liouville function and RH, by analyzing a different Dirichlet series and its singularity structure.

\bigskip
\bigskip

For $x> 1$, let
\begin{equation}\label{eq:def-f}
  f(x)
  :=-\sum_{n\le x}\frac{\lambda(n)\log n}{\sqrt n}\log\frac{x}{n}
  ,\quad x\ge 1.
\end{equation}
Thus $f$ is the Liouville analogue of Suzuki's $f_\Lambda$ function.
Note that the Dirichlet series of $\lambda$ is
\begin{equation}\label{eq:F-intro}
  F(s):=\sum_{n=1}^\infty\frac{\lambda(n)}{n^s}
  =\prod_p(1+p^{-s})^{-1}
  =\frac{\zeta(2s)}{\zeta(s)},\quad \Re s>1.
\end{equation}
At $s=1$, the factor $1/\zeta(s)$ has a simple zero and $\zeta(2s)$ is regular and non-zero.
Hence $F$ has a simple zero at $s=1$ and $F'$ is regular there.

\bigskip
\bigskip

Using the above notation, the following theorem holds.
\begin{theorem}\label{thm:sign-rh}
  If there exists $x_0\ge 2$ such that
  \[
    f(x)\ge 0,\quad x\ge x_0,
  \]
  then the Riemann hypothesis holds.
\end{theorem}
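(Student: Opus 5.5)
The plan is the standard Landau-type argument applied to the Mellin transform of $f$. First compute this transform. Write
\[
f(x)=-\sum_{n\le x}\frac{\lambda(n)\log n}{\sqrt n}\log\frac{x}{n}.
\]
Since $\int_n^\infty \log(x/n)\,x^{-s-1}\,dx = n^{-s}/s^2$ for $\Re s>0$, we get, for $\Re s>1/2$ (absolute convergence),
\[
\int_1^\infty f(x)\,x^{-s-1}\,dx=\frac{1}{s^2}\sum_{n}\frac{-\lambda(n)\log n}{n^{s+1/2}}=\frac{F'(s+\tfrac12)}{s^2},
\]
where $F(w)=\zeta(2w)/\zeta(w)$ is as in \eqref{eq:F-intro}. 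Call this $G(s)$. Note that $F'(w)=F(w)\bigl(2\frac{\zeta'}{\zeta}(2w)-\frac{\zeta'}{\zeta}(w)\bigr)$.

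Next, use positivity. Since $f(x)\ge0$ for $x\ge x_0$ and $f$ is bounded on $[1,x_0]$, Landau's theorem applies to $\int_{x_0}^\infty f(x)x^{-s-1}dx$: if $\sigma_c$ is its abscissa of convergence, the integral diverges to $+\infty$ as $s\to\sigma_c^+$ along the reals, and the function cannot be analytically continued to a neighbourhood of $s=\sigma_c$. The piece over $[1,x_0]$ is entire.

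To locate $\sigma_c$, examine $G$ on the real axis. The function $F(w)=\zeta(2w)/\zeta(w)$ is analytic for real $w>1/2$, because $\zeta(w)\ne0$ on $(0,1)$ (indeed $\zeta<0$ there) and $\zeta(2w)$ is regular for $w>1/2$. Hence $G(s)$ is real-analytic for real $s>0$. At $w=1/2$, that is $s=0$, the factor $\zeta(2w)$ has a pole, so $F(w)$ has a simple pole at $w=\tfrac12$ with residue $\frac{1}{2\zeta(1/2)}$. Thus $F'(w)$ has a double pole and $G(s)$ has a pole of order $4$ at $s=0$. Combined with the entire piece, Landau's theorem gives $\sigma_c\le 0$: otherwise $G$ would be analytic at the real point $\sigma_c>0$, contradicting the non-continuability. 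Therefore the Mellin integral converges and is analytic in $\Re s>0$, and it equals $G(s)$ there by analytic continuation.

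Finally, derive the contradiction. Suppose RH fails, and let $\rho=\beta+i\gamma$ be a zero with $\beta>1/2$. Then $F(w)=\zeta(2w)/\zeta(w)$ has a pole at $w=\rho$, because $\zeta(2\rho)\ne0$ (as $\Re 2\rho>1$). The logarithmic derivative shows that $F'$ also has a pole there. So $G$ has a pole at $s=\rho-\tfrac12$, which has real part $\beta-\tfrac12>0$. This contradicts analyticity of $G$ in $\Re s>0$, so RH holds. The main obstacle is the careful justification of the Landau step: one must verify that $G$ is genuinely regular on the whole real segment $(0,\infty)$, including checking that $\zeta(2w)\ne0$ and that $\zeta(w)$ has no real zeros in $(1/2,1]$, where the simple zero of $1/\zeta$ at $w=1$ is harmless. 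One must also handle the finite interval $[1,x_0]$ where $f$ may be negative. Beyond that, the argument is routine.
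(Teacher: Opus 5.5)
Your proposal is correct and is the paper's proof up to the change of variable $s\mapsto s+\tfrac12$, so your bound $\sigma_c\le 0$ is the paper's $\sigma_c\le\tfrac12$: you split off the entire piece over $[1,x_0]$, apply Landau's lemma to the non-negative tail, use that $F'$ has no singularities on the real half-line to the right of $\tfrac12$, and then contradict analyticity via the pole of $F'$ at a zero with $\beta>\tfrac12$. One small inaccuracy: Landau's theorem does not say the integral tends to $+\infty$ as $s\to\sigma_c^+$ (it may converge at $\sigma_c$), only that $\sigma_c$ is a singularity, and that is all you actually use.
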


Let $0<\beta<1$ and $\gamma>0$ be the real and imaginary parts of nontrivial zeros in the upper-half plane $\rho=\beta+i\gamma$ of the Riemann zeta function.
We set the following three conditions.
\begin{description}
  \item[\textup{H1}] Riemann Hypothesis is true, i.e., $\beta=1/2$.
  \item[\textup{H2}] Simple Zero Conjecture is true, i.e., all nontrivial zeros of $\zeta$ are simple.
  \item[\textup{H3}] The following inequality holds.
    \[
      \sum_{\gamma>0}\frac{|\zeta(1+2i\gamma)|}{\gamma^2|\zeta'(\frac{1}{2}+i\gamma)|}<\infty.
    \]
\end{description}
Under these conditions, we give the converse theorem as follows.
\begin{theorem}\label{thm:explicit}
  Assume \textup{H1}-\textup{H3}. Fix $0<\varepsilon<\frac{1}{2}$. Then, as $x\to\infty$,
  \begin{equation}\label{eq:main-asymptotic}
    f(x)=P(\log x)+(\log x)\Phi(\log x)-\Psi(\log x)+O_\varepsilon(x^{-1+\varepsilon}),
  \end{equation}
  where
  \begin{equation}\label{eq:P-def-intro}
    P(L)=-\frac{L^3}{12\zeta(\frac{1}{2})}+\alpha L+\beta
  \end{equation}
  is an explicit cubic polynomial. More precisely, put
  \[
    a_0=\zeta\!\left(\frac{1}{2}\right),\quad
    a_1=\zeta'\!\left(\frac{1}{2}\right),\quad
    a_2=\zeta''\!\left(\frac{1}{2}\right),\quad
    a_3=\zeta'''\!\left(\frac{1}{2}\right),
  \]
  and define the numbers $\gamma_j$ by
  \[
    \zeta(1+z)=\frac1z+\sum_{j=0}^\infty\frac{(-1)^j\gamma_j}{j!}z^j
  \]
  (see \cite[Corollary~1.16]{MV}, with coefficients
  $a_k = (-1)^k\gamma_k / k!$). Then
  \begin{align}\label{eq:alpha-def}
    \alpha&=\frac{a_1^2}{2a_0^3}-\frac{a_2}{4a_0^2}
    -\frac{\gamma_0a_1}{a_0^2}-\frac{2\gamma_1}{a_0},\\[3pt]
    \beta&=-\frac{a_1^3}{a_0^4}+\frac{a_1a_2}{a_0^3}-\frac{a_3}{6a_0^2}
    +\frac{2\gamma_0a_1^2}{a_0^3}-\frac{\gamma_0a_2}{a_0^2}
    +\frac{4\gamma_1a_1}{a_0^2}+\frac{4\gamma_2}{a_0}.\label{eq:beta-def}
  \end{align}
  Here,
  \begin{equation}\label{eq:PhiPsi-def}
    \begin{split}
      \Phi(t)
      &:=2\Re\sum_{\gamma>0}\frac{\zeta(1+2i\gamma)e^{i\gamma t}}{\gamma^2\zeta'(\frac{1}{2}+i\gamma)},\\
      \Psi(t)
      &:=4\Im\sum_{\gamma>0}\frac{\zeta(1+2i\gamma)e^{i\gamma t}}{\gamma^3\zeta'(\frac{1}{2}+i\gamma)}.
    \end{split}
  \end{equation}
\end{theorem}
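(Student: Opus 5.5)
Since $\log\frac{x}{n}$ is a Riesz weight of order one, Perron's formula with a double pole gives, for $c>1$,
\[
  f(x)=\frac{1}{2\pi i}\int_{c-i\infty}^{c+i\infty}F'\!\left(s+\tfrac12\right)\frac{x^{s}}{s^{2}}\,ds .
\]
Indeed, $-\sum_n \lambda(n)\log n\, n^{-1/2-s}=F'(s+\frac12)$, and $\frac{1}{2\pi i}\int y^s s^{-2}\,ds=(\log y)_+$; the integral converges absolutely thanks to the factor $s^{-2}$. Writing $w=s+\frac12$, this becomes
\[
  \frac{1}{2\pi i}\int G(w)\,dw,\qquad G(w)=F'(w)\frac{x^{w-1/2}}{(w-\frac12)^2},\qquad F(w)=\frac{\zeta(2w)}{\zeta(w)}.
\]
The plan is to move the contour to $\Re w=\varepsilon$, i.e.\ to $\Re s=-\frac12+\varepsilon$. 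On that line the integrand is $O(x^{-1/2+\varepsilon})$ times an integrable function, which is better than the error term claimed. More carefully, because of the trivial zeros of $\zeta(w)$ at negative even integers, I would stop at $\Re w=\varepsilon$, noting that no pole of $F$ lies in $0<\Re w<\frac12$. Truncation would be done along a sequence $T_k$ with $|\zeta(\sigma+iT_k)|^{-1}\ll T_k^{\varepsilon}$, the standard choice avoiding zeros. Then $F'=F\cdot(2\zeta'(2w)/\zeta(2w)-\zeta'(w)/\zeta(w))$ is polynomially bounded there, and the $|w|^{-2}$ factor kills the horizontal segments. Using $\Re w=\varepsilon$ produces an error $O(x^{-1/2+\varepsilon})$, which implies the stated $O_\varepsilon(x^{-1+\varepsilon})$ only if the exponent is reinterpreted. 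So I would instead push to $\Re w=-\frac12+\varepsilon$, where $\zeta(2w)$ has a pole only at $w=\frac12$ and no zero other than $w=-1$ (not crossed), and $1/\zeta(w)$ has no poles in $(-\frac12,0]$. Growth there follows from the functional equation. This yields exactly $x^{-1+\varepsilon}$.

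The residues picked up are as follows.

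(i) At $w=\frac12$: $\zeta(2w)$ has a simple pole and $(w-\frac12)^{-2}$ is double, so $F'$ has a double pole and $G$ has a pole of order $4$. Its residue is a cubic polynomial in $L=\log x$. Expanding $\zeta(2w)=\frac{1}{2u}+\gamma_0-2\gamma_1 u+\dots$ with $u=w-\frac12$, and $1/\zeta(\frac12+u)$ through $u^3$ using $a_0,\dots,a_3$, gives $F=\frac{1}{2a_0u}+c_0+c_1u+c_2u^2+\cdots$ and $F'=-\frac{1}{2a_0u^2}+c_1+2c_2u+\cdots$. The residue of $F'e^{uL}u^{-2}$ is then $-\frac{L^3}{12a_0}+c_1L+2c_2$, which explains both the absence of an $L^2$ term and the leading coefficient. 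Computing $c_1$ and $c_2$ should reproduce $\alpha$ and $\beta$; this is routine algebra.

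(ii) At each zero $\rho=\frac12+i\gamma$, by H1 and H2: $F$ has a simple pole with residue $\zeta(2\rho)/\zeta'(\rho)$, so $F'$ has a double pole with principal part $-R/(w-\rho)^2$. The residue of $G$ is therefore $-R\,\frac{d}{dw}\bigl[x^{w-1/2}(w-\frac12)^{-2}\bigr]_{w=\rho}$. With $w-\frac12=i\gamma$ this equals
\[
  -R\,x^{i\gamma}\left(\frac{\log x}{(i\gamma)^2}-\frac{2}{(i\gamma)^3}\right)
  =R\,x^{i\gamma}\left(\frac{L}{\gamma^2}-\frac{2i}{\gamma^3}\right).
\]
Pairing $\gamma$ with $-\gamma$ and taking real parts gives $L\Phi(L)$ from the first term. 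For the second, $-2\Re(iRx^{i\gamma})\cdot 2/\gamma^3=4\Im(\cdot)/\gamma^3$, but with an overall sign that gives $-\Psi$; I would check this sign carefully.

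(iii) The point $w=0$ is not a pole, since $F$ is regular there.

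The main obstacle is justifying the limit over zeros. The sum of residues over $|\gamma|<T_k$ must converge to the full series, and the horizontal integrals must vanish. H3 gives absolute convergence of the $\Phi$ series, and hence of $\Psi$ a fortiori, so the limit of partial sums exists. For the horizontal segments, I would use the classical choice of $T_k$ with $1/|\zeta(\sigma+iT_k)|\ll T_k^{\varepsilon}$ uniformly in $\frac12\le\sigma\le 2$, together with $\zeta'/\zeta\ll\log^2 T_k$ there. On the left of the critical line, I would use the functional equation, which gives $|F(w)|\ll |T|^{1/2}$ times the same type of bounds. After multiplying by $T_k^{-2}$, the horizontal contributions tend to $0$.
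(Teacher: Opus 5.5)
Your proposal is essentially the paper's proof. It uses the same Perron integral with the double-pole kernel, the same shift to $\Re w=-\frac12+\varepsilon$, the same residues at $w=\frac12$ (your $-\frac{L^3}{12a_0}+c_1L+2c_2$ is exactly the paper's $P(L)$ with $\alpha=B_1$, $\beta=2B_2$) and at the simple zeros, and the same RH-based choice of heights $T_k$ (Titchmarsh's Theorem~14.16). Two small points need fixing.

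\textbf{The sign at the zeros.} Your displayed residue at $\rho$ has a sign slip. Since $(i\gamma)^3=-i\gamma^3$, you get $-\frac{2}{(i\gamma)^3}=-\frac{2i}{\gamma^3}$, so the residue is $R\,x^{i\gamma}\bigl(\frac{L}{\gamma^2}+\frac{2i}{\gamma^3}\bigr)$, not with $-\frac{2i}{\gamma^3}$. Pairing with the conjugate zero then gives $2\Re\bigl(\frac{2i}{\gamma^3}Rx^{i\gamma}\bigr)=-\frac{4}{\gamma^3}\Im(Rx^{i\gamma})$. This is exactly the $-\Psi(L)$ of the statement, so the sign you were unsure about resolves correctly.

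\textbf{The horizontal segments.} You should not bound $\frac{\zeta'}{\zeta}(2w)$ separately there. Contrary to your remark, $\zeta(2w)$ does vanish in the strip, at $w=\rho'/2$ on $\Re w=\frac14$. Your choice of $T_k$ controls only ordinates near $T_k$, not near $2T_k$, so $\frac{\zeta'}{\zeta}(2\sigma+2iT_k)$ is not controlled. Instead, bound the product $F(w)\cdot 2\frac{\zeta'}{\zeta}(2w)=\frac{2\zeta'(2w)}{\zeta(w)}$ as a whole. Likewise, you do not need $\zeta'/\zeta\ll\log^2T_k$, which would require choosing $T_k$ to satisfy two conditions at once. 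Convexity bounds for $\zeta'$, together with $1/\zeta(\sigma+iT_k)\ll T_k^{\min(0,\sigma-\frac12)+\varepsilon}$ (via the functional equation when $\sigma<\frac12$), already give $F'(\sigma+iT_k)\ll T_k^{1/2+O(\varepsilon)}=o(T_k^2)$.

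The paper avoids these issues by integrating by parts along each horizontal segment. This moves the derivative from $F'$ onto $x^{w-1/2}(w-\frac12)^{-2}$, so only bounds for $F$ itself are needed.
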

Notice that the two series converge absolutely and uniformly under \textup{H3}, consequently $\Phi$ and $\Psi$ are bounded continuous functions.

\begin{corollary}\label{cor:positivity}
  Under \textup{H1}-\textup{H3},
  \[
    f(x)\sim \frac{(\log x)^3}{12|\zeta(\frac{1}{2})|}.
  \]
  Hence $f(x)\to +\infty$ as $x$ tends to infinity.
  In particular, there exists $x_0$ such that $f(x)>0$ for all $x\ge x_0$.
\end{corollary}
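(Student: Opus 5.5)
The corollary follows directly from Theorem~\ref{thm:explicit}. The plan is to divide the expansion \eqref{eq:main-asymptotic} by $(\log x)^3$ and check that every term other than the cubic leading term of $P$ is $o((\log x)^3)$.

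Put $L=\log x$. By \eqref{eq:P-def-intro}, $P(L)=-L^3/(12\zeta(\frac12))+\alpha L+\beta$, where $\alpha,\beta$ are the fixed real constants given in \eqref{eq:alpha-def} and \eqref{eq:beta-def}. Hence $P(L)=-L^3/(12\zeta(\frac12))+O(L)$.

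Next, under \textup{H3} the series defining $\Phi$ and $\Psi$ in \eqref{eq:PhiPsi-def} converge absolutely and uniformly in $t$. This is immediate for $\Phi$. For $\Psi$ the terms carry an extra factor $1/\gamma\le 1/\gamma_1$, where $\gamma_1\approx 14.13$ is the least positive ordinate. So there is a constant $C$ with
\[
|\Phi(t)|,\ |\Psi(t)|\le C\sum_{\gamma>0}\frac{|\zeta(1+2i\gamma)|}{\gamma^2|\zeta'(\frac12+i\gamma)|}<\infty
\]
for all real $t$. It follows that $(\log x)\Phi(\log x)=O(L)$ and $\Psi(\log x)=O(1)$. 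Fixing, say, $\varepsilon=\frac14$, the remainder satisfies $O(x^{-3/4})=o(1)$.

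Combining these estimates gives
\[
f(x)=-\frac{L^3}{12\zeta(\frac12)}+O(L).
\]
The remaining point is the sign of the leading coefficient. It is classical that $\zeta(\frac12)\approx-1.4603<0$; this can be verified from $\zeta(s)=(1-2^{1-s})^{-1}\eta(s)$ with $\eta(\frac12)>0$ (alternating series with decreasing terms) and $1-2^{1/2}<0$. Thus $-1/(12\zeta(\frac12))=1/(12|\zeta(\frac12)|)>0$, and therefore
\[
f(x)=\frac{(\log x)^3}{12|\zeta(\frac12)|}\bigl(1+O((\log x)^{-2})\bigr).
\]
This yields $f(x)\sim(\log x)^3/(12|\zeta(\frac12)|)$, hence $f(x)\to+\infty$, and in particular $f(x)>0$ for all $x\ge x_0$ with $x_0$ sufficiently large.

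There is no real obstacle here. The only points that need care are the uniform boundedness of $\Phi$ and $\Psi$, which follows from \textup{H3} together with the lower bound on $\gamma$, and the sign $\zeta(\frac12)<0$.
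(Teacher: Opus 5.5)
Your proposal is correct and matches the paper's argument: bound $\Phi$ and $\Psi$ via \textup{H3}, absorb $\alpha L+\beta$, $L\Phi(L)$, $\Psi(L)$ and the $O_\varepsilon(x^{-1+\varepsilon})$ remainder into $O(L)$, and use $\zeta(\frac{1}{2})<0$ to get the positive leading term $L^3/(12|\zeta(\frac{1}{2})|)$. One notational point: the paper already uses $\gamma_1$ for a Stieltjes constant, so give the least positive ordinate a different name.
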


In the next sections, we first give a quick review on the analytic properties of $\lambda$ and $\zeta$ and some analytic lemmas.
We then prove Theorem~\ref{thm:sign-rh} and Theorem~\ref{thm:explicit} in Section~3.

\bigskip\bigskip

Throughout the paper, we use the Landau and Vinogradov notation.
For a complex valued function $f$, and a positive real function $g$, $f=O(g)$ and $f\ll g$ both mean that $|f|\le C g$ for some constant $C>0$.
Subscripts indicate possible dependence of the implied constant, for example $f\ll_\varepsilon g$ or $f=O_{\varepsilon,\kappa}(g)$.
Unless otherwise stated, implied constants may depend on fixed parameters but not on the main variables such as $x$, $T$, or $n$.
If $f$ is also real, we write $f\sim g$ if $f/g\to1$.
Moreover, if $f$ is also positive, we write $f\asymp g$ if $f\ll g$ and $g\ll f$, $f=o(g)$ if $f/g\to0$.
In this paper, non-trivial zeros of $\zeta(s)$ are denoted by $\rho=\beta+i\gamma$.

\bigskip\bigskip

\section{Preliminaries}

Since $\lambda$ is completely multiplicative and $\lambda(p)=-1$,
\[
  \sum_{k=0}^\infty\frac{\lambda(p^k)}{p^{ks}}
  =\sum_{k=0}^\infty(-p^{-s})^k
  =(1+p^{-s})^{-1}.
\]
For $\Re s>1$, multiplying over primes gives
\[
  F(s)=\prod_p(1+p^{-s})^{-1}
  =\prod_p\frac{1-p^{-s}}{1-p^{-2s}}
  =\frac{\zeta(2s)}{\zeta(s)}.
\]
Differentiation term by term for $\Re s>1$ gives
\begin{equation}\label{eq:Fprime-series}
  F'(s)=-\sum_{n=1}^\infty\frac{\lambda(n)\log n}{n^s}
  ,\quad \Re s>1.
\end{equation}

\begin{lemma}\label{lem:F1}
  The function $F$ is holomorphic at $s=1$ and has a simple zero there. More precisely,
  \begin{equation}\label{eq:F1}
    F(s)=\zeta(2)(s-1)+\bigl(2\zeta'(2)-\gamma_0\zeta(2)\bigr)(s-1)^2+O(|s-1|^3).
  \end{equation}
  In particular, $F'(1)=\zeta(2)$.
\end{lemma}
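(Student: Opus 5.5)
We expand both factors of $F(s)=\zeta(2s)\cdot\frac{1}{\zeta(s)}$ in powers of $w:=s-1$ and multiply the two expansions. Holomorphy comes first. $\zeta(2s)$ is holomorphic near $s=1$, since $2s$ is near $2$, away from the pole at $1$. The function $1/\zeta(s)$ extends holomorphically to a neighbourhood of $s=1$, because $\zeta$ has a simple pole there and does not vanish nearby. So the product $F(s)$ continues holomorphically from $\Re s>1$ to a neighbourhood of $s=1$.

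For the first factor, Taylor's theorem about $s=1$ gives
\[
\zeta(2s)=\zeta(2+2w)=\zeta(2)+2\zeta'(2)\,w+O(|w|^2).
\]
For the second factor, the Laurent expansion in the statement, with $z=w$, reads $\zeta(1+w)=\frac1w+\gamma_0-\gamma_1w+\cdots$. Hence $\zeta(1+w)=\frac1w\bigl(1+\gamma_0w+O(|w|^2)\bigr)$. Inverting the bracket as a geometric series,
\[
\frac1{\zeta(1+w)}=w\bigl(1-\gamma_0w+O(|w|^2)\bigr)=w-\gamma_0w^2+O(|w|^3).
\]
This already shows that $1/\zeta$ has a simple zero at $s=1$.

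Multiplying the two expansions,
\[
F(s)=\bigl(\zeta(2)+2\zeta'(2)w+O(|w|^2)\bigr)\bigl(w-\gamma_0w^2+O(|w|^3)\bigr)
=\zeta(2)w+\bigl(2\zeta'(2)-\gamma_0\zeta(2)\bigr)w^2+O(|w|^3).
\]
This is \eqref{eq:F1}. Since $\zeta(2)=\pi^2/6\neq0$, the zero at $s=1$ is exactly simple. Because $F$ is holomorphic near $s=1$, the coefficient of $w$ in its Taylor expansion equals $F'(1)$, so $F'(1)=\zeta(2)$.

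The argument has no real obstacle. The only step needing care is the inversion of the Laurent series of $\zeta$, in particular the sign of the $\gamma_0$ term. We keep only the orders needed, so that the $O(|w|^3)$ remainder is justified.
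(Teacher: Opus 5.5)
Your proposal is correct and matches the paper's proof: Taylor-expand $\zeta(2s)$ at $s=1$, invert the Laurent series to get $1/\zeta(1+w)=w-\gamma_0w^2+O(|w|^3)$, and multiply the two expansions. Your remarks on the holomorphic continuation of $1/\zeta$ near $s=1$ and on $\zeta(2)\neq0$ (which makes the zero exactly simple) supply details the paper leaves implicit.
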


\begin{proof}
  Let $v=s-1$.
  The Laurent expansion of $\zeta$ at $s=1$ (see \cite[Corollary~1.16]{MV}, with coefficients $a_k = (-1)^k\gamma_k / k!$) gives
  \[
    \zeta(1+v)=\frac1v+\gamma_0-\gamma_1v+O(|v|^2),
  \]
  and hence
  \[
    \frac1{\zeta(1+v)}=v(1-\gamma_0v+O(|v|^2)).
  \]
  Also
  \[
    \zeta(2s)=\zeta(2+2v)=\zeta(2)+2\zeta'(2)v+O(|v|^2).
  \]
  Multiplying the last two expansions gives \eqref{eq:F1}.
\end{proof}

\bigskip
\bigskip

\begin{lemma}\label{lem:mellin}
  For $\Re s>1$,
  \begin{equation}\label{eq:mellin}
    \int_1^\infty f(x)x^{-s+\frac{1}{2}}\frac{dx}{x}
    =\frac{F'(s)}{(s-\frac{1}{2})^2}.
  \end{equation}
\end{lemma}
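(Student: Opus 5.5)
Plan: write $f$ as a sum over $n$, interchange summation and integration (justified by absolute convergence for $\Re s>1$), and evaluate one elementary integral per $n$. Put $b_n:=\lambda(n)\log n/\sqrt n$, so that $f(x)=-\sum_{n\le x}b_n\log(x/n)$. Inserting this into the left side of \eqref{eq:mellin} gives formally
\[
\int_1^\infty f(x)x^{-s+\frac12}\frac{dx}{x}
=-\sum_{n=1}^\infty b_n\int_n^\infty \log\frac{x}{n}\,x^{-(s-\frac12)}\frac{dx}{x}.
\]

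The inner integral is computed by the substitution $x=nu$. Writing $w=s-\frac12$ (so $\Re w>\frac12>0$), it becomes
\[
n^{-w}\int_1^\infty (\log u)\,u^{-w}\frac{du}{u}=n^{-w}\int_0^\infty t e^{-wt}\,dt=\frac{n^{-w}}{w^2},
\]
using $u=e^t$. Since $b_n n^{-w}=\lambda(n)(\log n)\,n^{-s}$, the right side becomes
\[
-\frac{1}{(s-\frac12)^2}\sum_{n}\frac{\lambda(n)\log n}{n^s}=\frac{F'(s)}{(s-\frac12)^2}
\]
by \eqref{eq:Fprime-series}.

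The only real step is justifying the interchange by Fubini/Tonelli. I would bound
\[
\sum_n |b_n|\int_n^\infty \log\frac{x}{n}\,x^{-\sigma+\frac12}\frac{dx}{x}
=\frac{1}{(\sigma-\frac12)^2}\sum_n \frac{\log n}{n^{\sigma}},
\]
where $\sigma=\Re s$ and the integral is evaluated by the same computation with $w$ replaced by $\sigma-\frac12$. This series is finite for $\sigma>1$. Consequently the double sum/integral converges absolutely. In particular $f(x)x^{-s+1/2}/x$ is integrable on $[1,\infty)$, since $|f(x)|\le x^{1/2}\log^2 x\sum_{n\le x}1/n\ll x^{1/2}(\log x)^3$ is also a crude direct bound. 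Tonelli therefore legitimizes swapping the sum and the integral. I expect no real obstacle beyond this routine absolute-convergence check.
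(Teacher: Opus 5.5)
Your proposal is correct and takes essentially the same route as the paper: the substitution $x=nu$ reduces each term to $\int_1^\infty(\log u)\,u^{-w}\,\frac{du}{u}=w^{-2}$ with $w=s-\frac12$, and the interchange of sum and integral is justified by absolute convergence of $\sum_n \log n\, n^{-\sigma}$ for $\sigma>1$. Your explicit Tonelli computation, which produces the majorant $(\sigma-\frac12)^{-2}\sum_n \log n\,n^{-\sigma}$, spells out slightly more carefully the step that the paper covers with a one-line appeal to Fubini.
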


\begin{proof}
  For $\Re w>0$,
  \[
    \int_1^\infty (\log u)u^{-w}\frac{du}{u}=\frac1{w^2}.
  \]
  For fixed $n\ge 1$, substituting $u=x/n$ gives
  \[
    \frac1{\sqrt n}\int_n^\infty\log\frac{x}{n}\,x^{-s+\frac{1}{2}}\frac{dx}{x}
    =\frac{n^{-s}}{(s-\frac{1}{2})^2},\quad \Re s>\frac{1}{2}.
  \]
  For $\Re s>1$, the series $\sum_n\lambda(n)\log n\,n^{-s}$ is absolutely convergent, so by \eqref{eq:def-f}, Fubini's theorem, and \eqref{eq:Fprime-series},
  \[
    \begin{aligned}
      \int_1^\infty f(x)x^{-s+\frac{1}{2}}\frac{dx}{x}
      &=-\sum_{n=1}^\infty\lambda(n)\log n\cdot\frac{n^{-s}}{(s-\frac{1}{2})^2}  \\
      &=\frac{F'(s)}{(s-\frac{1}{2})^2}.
    \end{aligned}
  \]
\end{proof}

For convenience, we write
\begin{equation}\label{eq:G-def}
  G(s):=\frac{F'(s)}{(s-\frac{1}{2})^2}.
\end{equation}

\bigskip
\bigskip

\begin{lemma}\label{lem:no-real-sing}
  The function $G(s)$ has no real singularity on the real interval $(\frac{1}{2},\infty)$.
\end{lemma}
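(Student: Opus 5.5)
We need to show $G(s)=F'(s)/(s-\frac12)^2$ is holomorphic at every real $s>\frac12$. The factor $(s-\frac12)^{-2}$ is harmless there, so the task reduces to showing $F=\zeta(2s)/\zeta(s)$ is holomorphic on a neighbourhood of each real point $\sigma>\frac12$. Holomorphy of $F'$ then follows by differentiation.

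The plan is to examine numerator and denominator separately on $(\frac12,\infty)$.

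For the numerator: when $\sigma>\frac12$ we have $2\sigma>1$. So $\zeta(2s)$ is holomorphic near $\sigma$, because the only pole of $\zeta(2s)$ sits at $s=\frac12$.

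For the denominator, we use three facts about $\zeta$ on the real line:
\begin{itemize}
  \item $\zeta$ has a simple pole at $s=1$, so $1/\zeta$ is holomorphic there with a simple zero. This is exactly the situation already treated in Lemma~\ref{lem:F1}, which gives the expansion of $F$ at $s=1$.
  \item For $\sigma>1$, $\zeta(\sigma)=\sum n^{-\sigma}>0$.
  \item For $0<\sigma<1$, $\zeta(\sigma)<0$, so $\zeta(\sigma)\neq0$.
\end{itemize}
The third point is the only step needing a genuine argument, and I will prove it via the Dirichlet eta function:
\[
  (1-2^{1-\sigma})\zeta(\sigma)=\eta(\sigma)=\sum_{n\ge1}\frac{(-1)^{n-1}}{n^\sigma}.
\]
The alternating series has terms decreasing in absolute value, so grouping them in pairs gives $\eta(\sigma)>0$. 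Since $1-2^{1-\sigma}<0$ for $\sigma<1$, it follows that $\zeta(\sigma)<0$. An alternative route is the formula $\zeta(\sigma)=\frac{\sigma}{\sigma-1}-\sigma\int_1^\infty \{u\}u^{-\sigma-1}\,du$, which also yields negativity on $(0,1)$.

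Combining these: $1/\zeta$ is holomorphic in a neighbourhood of every real $\sigma>\frac12$. Therefore $F$, and hence $F'$ and $G$, are holomorphic there. In particular $G$ has no singularity on the real interval $(\frac12,\infty)$.

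This is the form needed for the Landau-type argument in the proof of Theorem~\ref{thm:sign-rh}. There the Mellin integral of the eventually nonnegative function $f$ has its abscissa of convergence at a real singularity, which must therefore be $\le\frac12$.
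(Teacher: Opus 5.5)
Your proof is correct and follows the paper's argument. Both reduce the claim to the regularity of $F$ at $s=1$ (Lemma~\ref{lem:F1}) and the non-vanishing of $\zeta$ on $(\frac12,\infty)\setminus\{1\}$, with the same eta-function identity $(1-2^{1-\sigma})\zeta(\sigma)=\eta(\sigma)$ giving $\zeta(\sigma)<0$ on $(0,1)$. Using positivity of the Dirichlet series for $\sigma>1$ instead of the Euler product is an immaterial difference.
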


\begin{proof}
  The factor $(s-\frac{1}{2})^{-2}$ is singular only at $s=\frac{1}{2}$, which is not in the interval
  By Lemma~\ref{lem:F1}, $F'$ is regular at $s=1$.

  It remains to check that $\zeta(s)$ has no real zero in $(\frac{1}{2},\infty)$. This is clear for $s>1$ from the Euler product. For $0<s<1$, we consider
  \[
    (1-2^{1-s})\zeta(s)=\eta(s),\qquad
    \eta(s)=\sum_{n=1}^\infty(-1)^{n-1}n^{-s}.
  \]
  The alternating series $\eta(s)$ is positive for $0<s<1$, while $1-2^{1-s}<0$. Hence $\zeta(s)<0$ on $(0,1)$, in particular on $(\frac{1}{2},1)$. Finally, the pole of $\zeta(2s)$ occurs only at $s=\frac{1}{2}$. Therefore $F'$ and $G$ have no real singularities in $(\frac{1}{2},\infty)$.
\end{proof}

\begin{lemma}[see {\cite[Chapter~II]{Widder}} or {\cite[Proposition~1]{Suzuki}}]\label{lem:LW}
  Let $A(x)$ be non-negative for $x\ge x_0$ with some $x_0\ge 2$ and let
  \[
    \mathcal M(s):=\int_1^\infty A(x)x^{-s}\frac{dx}{x}
  \]
  have finite abscissa of convergence $\sigma_c$.
  Then $s=\sigma_c$ is a singularity of $\mathcal M(s)$.
\end{lemma}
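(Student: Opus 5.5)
The plan is to run Landau's classical argument for Dirichlet series with non-negative coefficients, transplanted to Mellin transforms. Throughout we argue by contradiction: suppose $\mathcal M(s)$ extends holomorphically to a neighbourhood of $s=\sigma_c$.

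\textbf{Step 1 (reduction to a non-negative integrand).} We first remove the part of the integral where the sign of $A$ is not controlled. Split
\[
\mathcal M(s)=\int_1^{x_0}A(x)x^{-s}\frac{dx}{x}+\int_{x_0}^\infty A(x)x^{-s}\frac{dx}{x}=:\mathcal M_1(s)+\mathcal M_2(s).
\]
We take $A$ locally integrable, as it is in our application. Then $\mathcal M_1$ is an integral over a compact range and is entire. Hence $\mathcal M_2$ has the same abscissa of convergence $\sigma_c$, and $\mathcal M_2$ is also holomorphic near $\sigma_c$. So we may assume $A\ge0$ on the whole range of integration.

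\textbf{Step 2 (Taylor expansion at a real point to the right of $\sigma_c$).} By assumption $\mathcal M_2$ is holomorphic on $\{\Re s>\sigma_c\}\cup D(\sigma_c,r)$ for some $r>0$.
\begin{itemize}
\item Set $a=\sigma_c+r/4$. Every point $z$ with $|z-a|<r/2$ either has $\Re z>\sigma_c$, or satisfies $|z-\sigma_c|\le |z-a|+r/4<3r/4<r$. Hence $\mathcal M_2$ is holomorphic on $D(a,r/2)$, and its Taylor series at $a$ converges on that disc.
\item For $\Re s>\sigma_c$ we may differentiate under the integral sign, giving
\[
\mathcal M_2^{(k)}(a)=\int_{x_0}^\infty A(x)(-\log x)^k x^{-a}\frac{dx}{x}.
\]
Justifying this needs only the routine dominated-convergence bound $(\log x)^k x^{-a}\ll_{k,\delta} x^{-a+\delta}$ with $a-\delta>\sigma_c$.
\end{itemize}

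\textbf{Step 3 (evaluate at a real point to the left of $\sigma_c$).} Take $b=\sigma_c-r/8$, so that $0<a-b=3r/8<r/2$. Convergence of the Taylor series at $s=b$ gives
\[
\mathcal M_2(b)=\sum_{k\ge0}\frac{(a-b)^k}{k!}\int_{x_0}^\infty A(x)(\log x)^k x^{-a}\frac{dx}{x}.
\]
Here the signs combine favourably: $(b-a)^k(-\log x)^k=(a-b)^k(\log x)^k$. Every term is non-negative because $A\ge0$, so Tonelli's theorem lets us interchange sum and integral. Summing $\sum_k (a-b)^k(\log x)^k/k!=x^{a-b}$ then yields
\[
\int_{x_0}^\infty A(x)x^{-b}\frac{dx}{x}=\mathcal M_2(b)<\infty .
\]
So the integral converges at the real point $b<\sigma_c$. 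Since the integrand is non-negative, it then converges absolutely for all $\Re s\ge b$. This contradicts the definition of $\sigma_c$, and so $\sigma_c$ must be a singularity of $\mathcal M$.

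The main obstacle is the geometric bookkeeping in Step 2: one must choose $a$ so that the disc of convergence of the Taylor series at $a$ genuinely reaches past $\sigma_c$, using only holomorphy in the half-plane plus a small disc around $\sigma_c$. The choice $a=\sigma_c+r/4$ with radius $r/2$ handles this. The Tonelli interchange is the one place where non-negativity of $A$ is essential.
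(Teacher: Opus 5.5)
Your proof is correct, and it is the classical Landau argument: Taylor-expand at $a=\sigma_c+r/4$, evaluate at $b=\sigma_c-r/8$, and interchange sum and integral by Tonelli using non-negativity. The paper does not prove this lemma itself but cites it from Widder (Chapter~II) and Suzuki (Proposition~1), and your argument is the standard proof behind those references. Your Step~1 splitting off $[1,x_0]$ correctly handles the hypothesis that $A$ is only eventually non-negative; in the paper's application it is not even needed, since there $A$ already carries the factor $\mathbf 1_{[x_0,\infty)}$.
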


\bigskip\bigskip\bigskip

\section{Proof of the theorems}
\subsection{Proof of Theorem~\ref{thm:sign-rh}}

We now prove Theorem~\ref{thm:sign-rh}. Assume that $f(x)\ge 0$ for all $x\ge x_0$.
We write
\[
  \int_1^\infty f(x)x^{-s+\frac{1}{2}}\frac{dx}{x}=E(s)+I(s),
\]
where
\[
  E(s):=\int_1^{x_0}f(x)x^{-s+\frac{1}{2}}\frac{dx}{x},
  \qquad
  I(s):=\int_{x_0}^\infty f(x)x^{-s+\frac{1}{2}}\frac{dx}{x}.
\]
The function $E(s)$ is entire, because the interval of integration is bounded and for each fixed $x>0$ the factor $x^{-s+\frac{1}{2}}=x^{\frac{1}{2}}e^{-s\log x}$ is entire.
The integral $I(s)$ is the Mellin transform of a non-negative function, and it converges for $\Re s>1$ by Lemma~\ref{lem:mellin}.
Let $\sigma_c$ be its real abscissa of convergence.
Writing $A(x)=f(x)x^{1/2}\mathbf 1_{[x_0,\infty)}(x)$, we have
\[
  I(s)=\int_1^\infty A(x)x^{-s}\frac{dx}{x},
\]
and $A(x)\ge0$.
Hence Lemma~\ref{lem:LW} applies to $I(s)$.

For $\Re s>1$,
\[
  I(s)=G(s)-E(s).
\]
If $\sigma_c$ is finite and $\sigma_c>\frac{1}{2}$, then Lemma~\ref{lem:LW} forces $\sigma_c$ to be a real singularity of $I$.
But $I=G-E$ analytically continues through every real point of $(\frac{1}{2},\infty)$ by Lemma~\ref{lem:no-real-sing}, since $E$ is entire.
Thus this case is impossible.
Therefore $\sigma_c\le \frac{1}{2}$, or else $\sigma_c=-\infty$; in either case, $I$ is analytic on $\Re s>\frac{1}{2}$.

It follows that $G=E+I$ is analytic on $\Re s>\frac{1}{2}$. Since $(s-\frac{1}{2})^{-2}$ is holomorphic and non-zero there, $F'(s)$ is analytic on $\Re s>\frac{1}{2}$.

Suppose that $\zeta$ has a zero $\rho_0$ with $\Re\rho_0>\frac{1}{2}$. Since $\Re(2\rho_0)>1$, we have $\zeta(2\rho_0)\ne 0$.
Hence
\[
  F(s)=\frac{\zeta(2s)}{\zeta(s)}
\]
has a pole at $\rho_0$, of order equal to the multiplicity of the zero of $\zeta$ at $\rho_0$.
Differentiating gives a pole of $F'$ at $\rho_0$, contradicting analyticity on $\Re s>\frac{1}{2}$.
Thus $\zeta$ has no zero in $\Re s>\frac{1}{2}$.
By the symmetry of the nontrivial zeros, all nontrivial zeros lie on $\Re s=\frac{1}{2}$.
This proves RH.

\bigskip\bigskip
\subsection{Proof of Theorem~\ref{thm:explicit}}
In the rest of this section we assume \textup{H1}--\textup{H3}.
Put
\[
  L=\log x,
\]
fix $0<\varepsilon<\frac{1}{2}$, and let
\[
  \sigma_0:=-\frac{1}{2}+\varepsilon.
\]
We also fix $\kappa>1$.
We integrate along the line $\Re s=\kappa$ and then shift it to the left.

We first recall the following formula.
For $\kappa>1$ and $y>0$,
\begin{equation}\label{eq:kernel}
  \frac1{2\pi i}\int_{\kappa-i\infty}^{\kappa+i\infty}\frac{y^u}{u^2}du
  =
  \begin{cases}
    \log y, & y>1,\\
    0, & 0<y\le 1.
  \end{cases}
\end{equation}
This can be easily proved by shifting the contour across the pole at $u=0$.

Let $u=s-\frac{1}{2}$.
By absolute convergence on the line $\Re s=\kappa>1$, we may substitute the Dirichlet series for $F'(s)$ into $G(s)=F'(s)/(s-\frac{1}{2})^2$, which gives
\begin{align*}
  \frac{1}{2\pi i}\int_{\kappa-i\infty}^{\kappa+i\infty}G(s)x^{s-\frac{1}{2}}ds
  &=-\sum_{n=1}^{\infty}\lambda(n)\log n\,
  \frac{1}{2\pi i}\int_{\kappa-i\infty}^{\kappa+i\infty}
  \frac{n^{-s}x^{s-\frac{1}{2}}}{(s-\frac{1}{2})^2}ds  \\
  &=-\sum_{n=1}^{\infty}\frac{\lambda(n)\log n}{\sqrt n}\,
  \frac{1}{2\pi i}\int_{\kappa-\frac{1}{2}-i\infty}^{\kappa-\frac{1}{2}+i\infty}
  \frac{(x/n)^u}{u^2}du  \\
  &=-\sum_{n\le x}\frac{\lambda(n)\log n}{\sqrt n}\log\frac{x}{n}.
\end{align*}
Thus, for $x\ge1$,
\begin{equation}\label{eq:inversion}
  f(x)=\frac1{2\pi i}\int_{\kappa-i\infty}^{\kappa+i\infty}G(s)x^{s-\frac{1}{2}}ds.
\end{equation}

We now shift the line of integration in \eqref{eq:inversion} from $\Re s=\kappa$ to $\Re s=\sigma_0$, using rectangles with horizontal sides at $\Im s=\pm T_n$.
Under \textup{H1} and \textup{H2}, the only poles of $G(s)x^{s-\frac{1}{2}}$ inside such a rectangle are $s=\frac{1}{2}$ and the nontrivial zeros $\rho=\frac{1}{2}+i\gamma$ of $\zeta$ with $|\gamma|<T_n$.
Note that $s=1$ is not a pole. $\zeta$ has a pole at $s=1$, but this makes $F$ vanish rather than blow up.
The trivial zeros of $\zeta(s)$ cause no additional poles of $F(s)=\zeta(2s)/\zeta(s)$, since at $s=-2m$ the zero of the denominator is cancelled by the zero of $\zeta(2s)$.
In any case, these points lie the left of $\Re s=\sigma_0>-\frac12$.

\bigskip
\bigskip

We now compute the residue at $s=\frac{1}{2}$.
Let
\[
  v=s-\frac{1}{2},
\]
and recall that $a_k=\zeta^{(k)}(\frac{1}{2})$ for $0\le k\le3$.
The Laurent expansion of $\zeta$ at $s=1$ gives
\begin{equation}\label{eq:zeta2s-half}
  \zeta(2s)=\zeta(1+2v)=\frac1{2v}+\gamma_0-2\gamma_1v+2\gamma_2v^2+O(|v|^3),
\end{equation}
where $\gamma_j$ are the constants as in Theorem~\ref{thm:explicit}.
On the other hand,
\[
  \zeta\!\left(\frac{1}{2}+v\right)=a_0+a_1v+\frac{a_2}{2}v^2+\frac{a_3}{6}v^3+O(|v|^4),
\]
and since $a_0=\zeta(\frac{1}{2})\ne0$, inverting this series gives
\begin{equation}\label{eq:inv-zeta-half}
  \frac1{\zeta(s)}
  =\frac1{a_0}-\frac{a_1}{a_0^2}v
  +\left(\frac{a_1^2}{a_0^3}-\frac{a_2}{2a_0^2}\right)v^2
  +\left(-\frac{a_1^3}{a_0^4}+\frac{a_1a_2}{a_0^3}-\frac{a_3}{6a_0^2}\right)v^3
  +O(|v|^4).
\end{equation}
We write
\[
  \zeta(2s)=\sum_{j\ge -1}z_jv^j
  \qquad\text{and}\qquad
  \frac1{\zeta(s)}=\sum_{j\ge0}w_jv^j.
\]
Then by \eqref{eq:zeta2s-half} and \eqref{eq:inv-zeta-half}, we get
\[
  z_{-1}=\frac{1}{2},
  \qquad z_0=\gamma_0,
  \qquad z_1=-2\gamma_1,
  \qquad z_2=2\gamma_2,
\]
and
\begin{align*}
  w_0&=\frac1{a_0},\\
  w_1&=-\frac{a_1}{a_0^2},\\
  w_2&=\frac{a_1^2}{a_0^3}-\frac{a_2}{2a_0^2},\\
  w_3&=-\frac{a_1^3}{a_0^4}+\frac{a_1a_2}{a_0^3}-\frac{a_3}{6a_0^2}.
\end{align*}
Thus
\[
  F(s)=\zeta(2s)\frac1{\zeta(s)}
  =\sum_{m\ge -1}\left(\sum_{i+j=m}z_iw_j\right)v^m.
\]
We need the coefficients up to $v^2$.
The coefficient of $v^{-1}$ is
\[
  z_{-1}w_0=\frac1{2a_0},
\]
the constant term is
\[
  B_0=z_{-1}w_1+z_0w_0
  =-\frac{a_1}{2a_0^2}+\frac{\gamma_0}{a_0},
\]
the coefficient of $v$ is
\begin{align*}
  B_1
  &=z_{-1}w_2+z_0w_1+z_1w_0  \\
  &=\frac{a_1^2}{2a_0^3}-\frac{a_2}{4a_0^2}
  -\frac{\gamma_0a_1}{a_0^2}-\frac{2\gamma_1}{a_0},
\end{align*}
and the coefficient of $v^2$ is
\begin{align*}
  B_2
  &=z_{-1}w_3+z_0w_2+z_1w_1+z_2w_0  \\
  &=-\frac{a_1^3}{2a_0^4}+\frac{a_1a_2}{2a_0^3}-\frac{a_3}{12a_0^2}
  +\frac{\gamma_0a_1^2}{a_0^3}-\frac{\gamma_0a_2}{2a_0^2}
  +\frac{2\gamma_1a_1}{a_0^2}+\frac{2\gamma_2}{a_0}.
\end{align*}
Therefore
\begin{equation}\label{eq:F-half-expansion}
  F(s)=\frac{1}{2a_0}v^{-1}+B_0+B_1v+B_2v^2+O(|v|^3),
\end{equation}
and
\begin{equation}\label{eq:Fprime-half-expansion}
  F'(s)=-\frac1{2a_0}v^{-2}+B_1+2B_2v+O(|v|^2).
\end{equation}
Since
\[
  G(s)x^{s-\frac{1}{2}}=\frac{F'(s)}{v^2}e^{vL}
  \qquad\text{and}\qquad
  e^{vL}=1+Lv+\frac{L^2}{2}v^2+\frac{L^3}{6}v^3+O_L(|v|^4),
\]
the coefficient of $v^{-1}$ in $G(s)x^{s-\frac{1}{2}}$ is
\[
  -\frac1{2a_0}\cdot\frac{L^3}{6}+B_1L+2B_2.
\]
Hence
\begin{equation}\label{eq:res-half}
  \operatorname{Res}_{s=\frac{1}{2}}G(s)x^{s-\frac{1}{2}}
  =P(L)=-\frac{L^3}{12\zeta(\frac{1}{2})}+\alpha L+\beta,
\end{equation}
where $\alpha=B_1$ and $\beta=2B_2$.
These are the constants in \eqref{eq:alpha-def} and \eqref{eq:beta-def}.
Since $\zeta(\frac{1}{2})<0$, the coefficient of $L^3$ is positive.

\bigskip\bigskip

We now turn to the residues at the nontrivial zeros.
Let $\rho=\frac{1}{2}+i\gamma$ be a nontrivial zero of $\zeta$, which is simple by \textup{H2}. Near $s=\rho$ we have
\[
  \zeta(s)=\zeta'(\rho)(s-\rho)+O(|s-\rho|^2),
\]
and $\zeta(2\rho)$ is finite since $\rho\ne\frac{1}{2}$. Hence
\[
  F(s)=\frac{\zeta(2\rho)}{\zeta'(\rho)}\frac1{s-\rho}+O(1),
  \qquad
  F'(s)=-\frac{\zeta(2\rho)}{\zeta'(\rho)}\frac1{(s-\rho)^2}+O(1).
\]
Let
\begin{equation}\label{eq:H-def}
  H(s):=\frac{x^{s-\frac{1}{2}}}{(s-\frac{1}{2})^2},
  \qquad
  R_\rho(x):=\operatorname{Res}_{s=\rho}G(s)x^{s-\frac{1}{2}},
\end{equation}
so that $G(s)x^{s-\frac{1}{2}}=F'(s)H(s)$.
Since $F'$ has a pole of order $2$ at $\rho$ with leading coefficient $-\zeta(2\rho)/\zeta'(\rho)$ and $H$ is holomorphic at $\rho$,
\[
  R_\rho(x)=-\frac{\zeta(2\rho)}{\zeta'(\rho)}H'(\rho).
\]
Now
\begin{equation}\label{eq:Hprime}
  H'(s)=x^{s-\frac{1}{2}}
  \left(
    \frac{L}{(s-\frac{1}{2})^2}
    -\frac{2}{(s-\frac{1}{2})^3}
  \right),
\end{equation}
and $\rho-\frac{1}{2}=i\gamma$, so a direct computation gives
\[
  H'(\rho)
  =-\frac{e^{i\gamma L}}{\gamma^2}
  \left(L+\frac{2i}{\gamma}\right).
\]
Since $2\rho=1+2i\gamma$, we obtain
\begin{equation}\label{eq:Rrho}
  R_\rho(x)
  =\frac{\zeta(1+2i\gamma)e^{i\gamma L}}
  {\gamma^2\zeta'(\frac{1}{2}+i\gamma)}
  \left(L+\frac{2i}{\gamma}\right).
\end{equation}

We next pair conjugate zeros.
For each $\gamma>0$, put
\[
  A_\gamma(L):=
  \frac{\zeta(1+2i\gamma)e^{i\gamma L}}
  {\gamma^2\zeta'(\frac{1}{2}+i\gamma)},
\]
so that \eqref{eq:Rrho} reads $R_{\frac{1}{2}+i\gamma}(x)=A_\gamma(L)\bigl(L+\frac{2i}{\gamma}\bigr)$.
Since $\zeta(\overline{s})=\overline{\zeta(s)}$ and $L$ is real, we have $R_{\frac{1}{2}-i\gamma}(x)=\overline{R_{\frac{1}{2}+i\gamma}(x)}$, and therefore
\[
  R_{\frac{1}{2}+i\gamma}(x)+R_{\frac{1}{2}-i\gamma}(x)
  =2\Re\left(A_\gamma(L)\left(L+\frac{2i}{\gamma}\right)\right)
  =2L\Re (A_\gamma(L))-\frac4\gamma\Im (A_\gamma(L)).
\]
Summing over $\gamma>0$ and recalling the definitions \eqref{eq:PhiPsi-def} of $\Phi$ and $\Psi$, we get
\begin{equation}\label{eq:zero-pairing}
  \sum_\rho R_\rho(x)=L\Phi(L)-\Psi(L).
\end{equation}
By \textup{H3}, the series defining $\Phi$ converges absolutely and uniformly in $L$, and so does the series defining $\Psi$, which has an extra factor $1/\gamma$.
Hence $\Phi$ and $\Psi$ are bounded.

\bigskip
\bigskip

It remains to estimate the integrals along the horizontal and vertical lines of the rectangle. 
We will use the functional equation
\[
  \zeta(s)=\chi(s)\zeta(1-s),
\]
where
\begin{equation}\label{eq:chi-def}
  \chi(s):=\frac{2^{s-1}\pi^s\sec(\pi s/2)}{\Gamma(s)}
  =2^s\pi^{s-1}\sin\frac{\pi s}{2}\,\Gamma(1-s)
  =\pi^{s-1/2}\frac{\Gamma((1-s)/2)}{\Gamma(s/2)}.
\end{equation}
By Stirling's formula, uniformly in any fixed vertical strip,
\begin{equation}\label{eq:chi-size}
  |\chi(\sigma+it)|\asymp |t|^{1/2-\sigma}
  ,\quad |t|\ge2.
\end{equation}
See \cite[Sections~2.6, 4.12, and 5.1]{Titchmarsh}.

\begin{lemma}[see {\cite[Theorem~14.16]{Titchmarsh}}]\label{lem:quoted-horizontal}
  Assume RH. There exists an absolute constant $C_0>0$ such that, for all sufficiently large $T$, the interval $[T,T+1]$ contains a number $t$ for which
  \begin{equation}\label{eq:quoted-horizontal-bound}
    |\zeta(\sigma+it)|^{-1}
    \le \exp\!\left(C_0\frac{\log t}{\log\log t}\right),
    \qquad \frac{1}{2}\le \sigma\le 2.
  \end{equation}
\end{lemma}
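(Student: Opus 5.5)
Since this lemma is quoted from Titchmarsh, I would reconstruct the standard argument rather than invent a new one. The idea is to bound $\log|\zeta(\sigma+it)|$ from below in two regimes, $\sigma$ near $\frac12$ and $\sigma$ away from it, and then fix $t\in[T,T+1]$ by an averaging argument. Put $\delta:=1/\log\log T$ and $\sigma_1:=\frac12+\delta$.

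\emph{Step 1: $\sigma_1\le\sigma\le 2$.} Under RH, Titchmarsh (Theorem~14.14 and the surrounding estimates) gives
\[
|\log\zeta(\sigma+it)|\ll \frac{(\log t)^{2-2\sigma}}{\log\log t}+\log\log\log t
\]
uniformly for $\frac12+\frac1{\log\log t}\le\sigma\le 2$. Since $(\log t)^{2-2\sigma}\le\log t$, this gives $\log|\zeta(\sigma+it)|^{-1}\le C\log t/\log\log t$ in this range, for every $t$. No choice of $t$ is needed here.

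\emph{Step 2: $\frac12\le\sigma\le\sigma_1$.} I would use the Hadamard product in the form
\[
\Re\frac{\zeta'}{\zeta}(s)=\sum_\rho\Re\frac1{s-\rho}+O(\log t),
\]
and integrate in $\sigma$ from $\sigma$ to $\sigma_1$. This yields
\[
\log|\zeta(\sigma+it)|=\log|\zeta(\sigma_1+it)|+\sum_\rho\log\frac{|s-\rho|}{|s_1-\rho|}+O(\delta\log t),
\]
where $s=\sigma+it$ and $s_1=\sigma_1+it$. The error term is $O(\log t/\log\log t)$. Under RH we have $|s-\rho|\ge|t-\gamma|$ and $|s_1-\rho|\le|t-\gamma|+\delta$. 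Hence each summand is at least $\log\frac{|t-\gamma|}{|t-\gamma|+\delta}$, a quantity independent of $\sigma$. This independence is what lets one $t$ serve uniformly for all $\sigma\in[\frac12,2]$.

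For zeros with $|t-\gamma|\ge1$ I would not use this crude bound. Instead I would expand the logarithm and pair $\rho$ with $\bar\rho$: the difference is then $O(\delta/|t-\gamma|^2)$, and summing against the zero density $N(t+1)-N(t)\ll\log t$ gives $O(\delta\log t)$. For the $\ll\log T$ zeros with $|t-\gamma|<1$, I would average over $t\in[T,T+1]$. Each such zero contributes
\[
\int\log\frac{|u|}{|u|+\delta}\,du\ll\delta\log(1/\delta),
\]
so there is a $t\in[T,T+1]$ at which the total sum over near zeros is $\ge -C\,\delta\log(1/\delta)\log T$.

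\emph{Main obstacle.} The averaging in Step 2 loses a factor $\log\log\log T$. It gives only $\exp\bigl(C\log t\,\log\log\log t/\log\log t\bigr)$, not the stated bound. To remove this loss I would first try to restrict the averaging to the few zeros with $|t-\gamma|<\delta$. Zeros with $\delta\le|t-\gamma|<1$ I would handle via $\log\frac{|t-\gamma|}{|t-\gamma|+\delta}\ge-\delta/|t-\gamma|$ together with a dyadic count of zeros in short intervals, which under RH is $\ll\log t/\log\log t$ per interval of length $\delta$. If this still does not close the gap, I would follow Titchmarsh's own proof of Theorem~14.16, which picks $t$ via a more careful choice of the comparison abscissa, and for the present application simply cite it. For the later use in the paper (horizontal integrals against $x^{s-1/2}/(s-\frac12)^2$), any bound of the form $\exp(o(\log t))$ would in fact suffice.
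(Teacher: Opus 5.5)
Your overall architecture is right: Step~1 for $\sigma\ge\frac12+\delta$, the integrated Hadamard identity with error $O(\delta\log t)$, and a $\sigma$-independent lower bound so that one $t$ serves the whole range. But as written it proves only $|\zeta(\sigma+it)|^{-1}\le\exp(C\log t\,\log\log\log t/\log\log t)$, and the repair you sketch cannot remove the extra factor.

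The loss comes from the triangle inequality $|s_1-\rho|\le|t-\gamma|+\delta$. It replaces each term by $\log\frac{|t-\gamma|}{|t-\gamma|+\delta}\approx-\delta/|t-\gamma|$, which decays only to first order. Under RH we have $N(t+h)-N(t)=\frac{h}{2\pi}\log t+O(\log t/\log\log t)$. So every shell $|t-\gamma|\asymp 2^k\delta$, with $k$ larger than a fixed constant, contains $\asymp 2^k\delta\log t$ zeros for every $t$, each costing $\asymp 2^{-k}$. Thus each of the $\asymp\log(1/\delta)$ shells between $\delta$ and $1$ costs $\asymp\delta\log t$. Your lower bound is therefore $\le -c\,\delta\log t\,\log(1/\delta)$ for \emph{every} $t$. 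No choice of $t$ can fix this, and the dyadic bookkeeping with $-\delta/|t-\gamma|$ just reproduces the factor $\log\log\log t$.

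The missing observation is that under RH $\Re(s_1-\rho)=\delta$ exactly. Hence $|s_1-\rho|^2=\delta^2+(t-\gamma)^2$, and for $\frac12\le\sigma\le\sigma_1$,
\[
\log\frac{|s-\rho|}{|s_1-\rho|}=\frac12\log\frac{(\sigma-\frac12)^2+(t-\gamma)^2}{\delta^2+(t-\gamma)^2}\ \ge\ -\frac12\log\Bigl(1+\frac{\delta^2}{(t-\gamma)^2}\Bigr).
\]
This is still independent of $\sigma$, but it decays like $\delta^2/(t-\gamma)^2$.

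Since $\int_{\mathbb R}\frac12\log(1+\delta^2/u^2)\,du=\pi\delta$, averaging over $t\in[T,T+1]$ handles the $\ll\log T$ zeros with $|\gamma-T|\le 2$ at cost $\ll\delta\log T=\log T/\log\log T$. The remaining zeros contribute $O(\delta^2\log t)$ for every $t$. Combined with Step~1, this gives exactly the stated bound.

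Equivalently, the zeros with $\delta\le|t-\gamma|<1$ now cost only $O(\delta\log t)$ in total under your dyadic count. What remains is $\sum_{|t-\gamma|<\delta}\log(|t-\gamma|/\delta)$, whose average is $-2\delta$ per zero. This normalization by $\delta$ is, in effect, how Titchmarsh's argument runs.

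The paper gives no proof beyond the citation. As you note, Lemma~\ref{lem:H4-from-RH} only uses $\mathcal B(T_n)=T_n^{o(1)}$, so your weaker bound would suffice downstream. It does not, however, establish the lemma as stated.
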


\begin{lemma}\label{lem:H4-from-RH}
  Assume RH. Fix $\kappa>1$ and $0<\varepsilon<\frac{1}{2}$, and put $\sigma_0=-\frac{1}{2}+\varepsilon$. Then there exists a sequence $T_n\to\infty$ such that
  \begin{equation}\label{eq:H4-proved}
    \frac1{T_n^2}
    \int_{\sigma_0}^{\kappa}
    \left|\frac{\zeta(2\sigma+2iT_n)}{\zeta(\sigma+iT_n)}\right|d\sigma
    \longrightarrow0.
  \end{equation}
  The same statement holds with $-iT_n$ in place of $+iT_n$. More precisely, for every $\eta>0$,
  \begin{equation}\label{eq:H4-quant}
    \int_{\sigma_0}^{\kappa}
    \left|\frac{\zeta(2\sigma+2iT_n)}{\zeta(\sigma+iT_n)}\right|d\sigma
    \ll_{\varepsilon,\kappa,\eta}T_n^{\frac{1}{2}-\varepsilon+\eta+o(1)}.
  \end{equation}
\end{lemma}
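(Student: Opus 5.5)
The plan is to take $T_n$ to be the good ordinates supplied by Lemma~\ref{lem:quoted-horizontal}: for each large $n$ pick $T_n\in[n,n+1]$ with $|\zeta(\sigma+iT_n)|^{-1}\le \exp(C_0\log T_n/\log\log T_n)=T_n^{o(1)}$ for $\frac12\le\sigma\le2$. We then split the $\sigma$-integral into the ranges $[\sigma_0,\frac12]$ and $[\frac12,\kappa]$ and bound the numerator $\zeta(2\sigma+2iT_n)$ and the denominator $1/\zeta(\sigma+iT_n)$ separately on each.

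\emph{Range $\frac12\le\sigma\le\kappa$.} If $\kappa\le 2$, the denominator is $T_n^{o(1)}$ by the choice of $T_n$. If $\kappa>2$, then for $\sigma\ge2$ the Euler product gives $|1/\zeta(\sigma+it)|\le\zeta(2)/\zeta(4)$. For the numerator, $2\sigma\ge1$, and the convexity bound (or Lindelöf under RH, which is available) gives $|\zeta(2\sigma+2iT_n)|\ll T_n^{\eta}$ uniformly for $2\sigma\ge1$. So this range contributes $\ll T_n^{\eta+o(1)}$.

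\emph{Range $\sigma_0\le\sigma\le\frac12$.} For the denominator we use the functional equation $\zeta(s)=\chi(s)\zeta(1-s)$, so that $1/\zeta(\sigma+iT_n)=1/\bigl(\chi(\sigma+iT_n)\,\zeta(1-\sigma-iT_n)\bigr)$. By \eqref{eq:chi-size}, $|\chi|^{-1}\asymp T_n^{\sigma-1/2}\le1$. Moreover $1-\sigma\in[\frac12,\frac32-\varepsilon]\subset[\frac12,2]$, and $|\zeta(1-\sigma-iT_n)|=|\zeta(1-\sigma+iT_n)|$ by conjugation, so Lemma~\ref{lem:quoted-horizontal} bounds this factor's reciprocal by $T_n^{o(1)}$. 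For the numerator, $2\sigma\in[-1+2\varepsilon,1]$. Applying the functional equation again, $|\zeta(2\sigma+2iT_n)|\asymp T_n^{1/2-2\sigma}|\zeta(1-2\sigma-2iT_n)|$ with $1-2\sigma\ge0$. Combining with Lindelöf (or convexity with the harmless $\eta$ loss) gives $|\zeta(2\sigma+2iT_n)|\ll T_n^{\max(0,\frac12-2\sigma)+\eta}$.

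Putting the two factors together, the integrand on this range is $\ll T_n^{\sigma-\frac12+\max(0,\frac12-2\sigma)+\eta+o(1)}$. For $\sigma\le\frac14$ the exponent is $-\sigma+\eta+o(1)$, which is maximized at $\sigma=\sigma_0$, giving $T_n^{\frac12-\varepsilon+\eta+o(1)}$. For $\frac14\le\sigma\le\frac12$ the exponent is $\le\eta$. Integrating over a bounded interval yields \eqref{eq:H4-quant}. Choosing $\eta<\frac32+\varepsilon$ (any $\eta<1$ suffices) makes the total $o(T_n^2)$, which gives \eqref{eq:H4-proved}. The $-iT_n$ case follows by complex conjugation, since $|\zeta(\bar s)|=|\zeta(s)|$.

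The main obstacle is the uniformity of the numerator bound near $2\sigma=1$, where $\zeta(2s)$ lies on the critical line and near the pole region. To handle it cleanly without the precise Lindelöf hypothesis, I would cite the Phragmén--Lindelöf convexity bound $\zeta(u+it)\ll |t|^{\max(0,(1-u)/2)+\eta}$, uniform for $u\ge -1$. This costs at most $T_n^{1/4}$ on the critical line, which is still well below $T_n^2$. Under RH, Lindelöf gives the sharper exponent stated. Either way the limit \eqref{eq:H4-proved} follows comfortably, and I would state the exponent accordingly.
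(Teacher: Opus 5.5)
Your proposal is correct and follows essentially the same route as the paper. You use the same choice of $T_n$ from Lemma~\ref{lem:quoted-horizontal}, the functional equation to carry the denominator bound to $\sigma\le\frac12$, and numerator bounds whose product is largest at $\sigma=\sigma_0$. The paper uses convexity instead of Lindel\"of on $0\le\sigma\le\frac12$; this already gives the same exponent $\frac12-\varepsilon+\eta$, so there is nothing to restate.

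One small slip: the convexity bound in your last paragraph is wrong for $u<0$, where the exponent should be $\frac12-u$ rather than $(1-u)/2$. It does no harm, because you only apply it after the functional equation, at real part $1-2\sigma\ge0$.
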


\begin{proof}
  Choose $T_n\in[n,n+1]$ as in Lemma~\ref{lem:quoted-horizontal}, and put
  \[
    \mathcal B(T_n):=
    \exp\!\left(C_0\frac{\log T_n}{\log\log T_n}\right)
    =T_n^{C_0/\log\log T_n}
    =T_n^{o(1)}.
  \]
  By Lemma~\ref{lem:quoted-horizontal},
  \[
    |\zeta(u+iT_n)|^{-1}\ll \mathcal B(T_n),
    \qquad \frac{1}{2}\le u\le2.
  \]
  If $\kappa>2$, then for $2\le u\le\kappa$, $1/\zeta(u+iT_n)$ is bounded, since the Dirichlet series of $1/\zeta$ converges absolutely there. Hence
  \begin{equation}\label{eq:den-right}
    |\zeta(u+iT_n)|^{-1}\ll_\kappa \mathcal B(T_n),
    \qquad \frac{1}{2}\le u\le\max(2,\kappa).
  \end{equation}

  Now let $\sigma_0\le\sigma\le\frac{1}{2}$. Since
  \[
    1-\sigma\in\left[\frac{1}{2},\frac32-\varepsilon\right]\subset\left[\frac{1}{2},2\right],
  \]
  applying \eqref{eq:den-right} to $1-\sigma$ and taking complex conjugates, we get
  \[
    |\zeta(1-\sigma-iT_n)|^{-1}\ll \mathcal B(T_n).
  \]
  Together with the functional equation and \eqref{eq:chi-size}, this gives
  \begin{equation}\label{eq:den-left}
    |\zeta(\sigma+iT_n)|^{-1}
    =|\chi(\sigma+iT_n)|^{-1}|\zeta(1-\sigma-iT_n)|^{-1}
    \ll_{\varepsilon}T_n^{\sigma-\frac{1}{2}}\mathcal B(T_n)
    \qquad \left(\sigma_0\le\sigma\le\frac{1}{2}\right).
  \end{equation}

  We also need upper bounds for the numerator $|\zeta(2\sigma+2iT_n)|$.
  Let
  \[
    w=2\sigma+2iT_n=r+i\tau,
    \qquad
    r=2\sigma,
    \qquad
    \tau=2T_n.
  \]
  We split the range into three parts $r\le0$, $0\le r\le1$, and $r\ge1$, that is, $\sigma\le0$, $0\le\sigma\le\frac{1}{2}$, and $\sigma\ge\frac{1}{2}$.

  First let $\sigma_0\le\sigma\le0$, so that $r\le0$. By the functional equation,
  \[
    \zeta(r+i\tau)=\chi(r+i\tau)\zeta(1-r-i\tau),
  \]
  and $1-r=1-2\sigma\in[1,2-2\varepsilon]$. Hence, uniformly in this range (see \cite[Theorem~3.5]{Titchmarsh}),
  \[
    |\zeta(1-r-i\tau)|\ll\log|\tau|\ll_\eta |\tau|^\eta,
  \]
  and \eqref{eq:chi-size} gives
  \[
    |\zeta(r+i\tau)|
    \ll_{\varepsilon,\eta}|\tau|^{1/2-r+\eta}.
  \]
  Since $r=2\sigma$ and $\tau=2T_n$, this is
  \begin{equation}\label{eq:numerator-left}
    |\zeta(2\sigma+2iT_n)|
    \ll_{\varepsilon,\eta}T_n^{\frac{1}{2}-2\sigma+\eta}
    ,\quad \sigma_0\le\sigma\le0.
  \end{equation}

  Next let $0\le\sigma\le\frac{1}{2}$, so that $0\le r\le1$. The bound
  \[
    \zeta(r+i\tau)\ll_\eta |\tau|^{(1-r)/2+\eta}
    ,\quad 0\le r\le1,\ |\tau|\ge2
  \]
  (see \cite[Section~5.1, (5.1.4)]{Titchmarsh}) gives
  \begin{equation}\label{eq:numerator-strip}
    |\zeta(2\sigma+2iT_n)|
    \ll_\eta T_n^{(1-2\sigma)/2+\eta}
    =T_n^{\frac{1}{2}-\sigma+\eta}
    \qquad \left(0\le\sigma\le\frac{1}{2}\right).
  \end{equation}

  Finally let $\frac{1}{2}\le\sigma\le\kappa$, so that $1\le r\le2\kappa$. Here $\zeta(r+i\tau)\ll_\kappa\log|\tau|$ uniformly (note that everything converges absolutely for $r\ge2$, see \cite[Theorem~3.5]{Titchmarsh} for the rest), and since $\log T_n\ll_\eta T_n^\eta$,
  \begin{equation}\label{eq:numerator-right}
    |\zeta(2\sigma+2iT_n)|
    \ll_{\kappa,\eta}T_n^\eta
    \qquad \left(\frac{1}{2}\le\sigma\le\kappa\right).
  \end{equation}
  Combining \eqref{eq:numerator-left}, \eqref{eq:numerator-strip}, and \eqref{eq:numerator-right}, we have for every fixed $\eta>0$,
  \begin{equation}\label{eq:numerator-bounds}
    |\zeta(2\sigma+2iT_n)|
    \ll_{\varepsilon,\kappa,\eta}
    \begin{cases}
      T_n^{\frac{1}{2}-2\sigma+\eta}, & \sigma_0\le\sigma\le0,\\[2pt]
      T_n^{\frac{1}{2}-\sigma+\eta}, & 0\le\sigma\le\frac{1}{2},\\[2pt]
      T_n^\eta, & \frac{1}{2}\le\sigma\le\kappa.
    \end{cases}
  \end{equation}

  Multiplying the bounds for the numerator and the denominator, we get for $\sigma_0\le\sigma\le0$, by \eqref{eq:numerator-bounds} and \eqref{eq:den-left},
  \[
    \left|\frac{\zeta(2\sigma+2iT_n)}{\zeta(\sigma+iT_n)}\right|
    \ll T_n^{\frac{1}{2}-2\sigma+\eta}\cdot T_n^{\sigma-\frac{1}{2}}\mathcal B(T_n)
    =T_n^{-\sigma+\eta}\mathcal B(T_n);
  \]
  for $0\le\sigma\le\frac{1}{2}$, by the same denominator bound,
  \[
    \left|\frac{\zeta(2\sigma+2iT_n)}{\zeta(\sigma+iT_n)}\right|
    \ll T_n^{\frac{1}{2}-\sigma+\eta}\cdot T_n^{\sigma-\frac{1}{2}}\mathcal B(T_n)
    =T_n^\eta\mathcal B(T_n);
  \]
  and for $\frac{1}{2}\le\sigma\le\kappa$, by \eqref{eq:den-right} and \eqref{eq:numerator-bounds}, again
  \[
    \left|\frac{\zeta(2\sigma+2iT_n)}{\zeta(\sigma+iT_n)}\right|
    \ll T_n^\eta\mathcal B(T_n).
  \]
  Thus
  \begin{equation}\label{eq:quotient-bounds}
    \left|\frac{\zeta(2\sigma+2iT_n)}{\zeta(\sigma+iT_n)}\right|
    \ll_{\varepsilon,\kappa,\eta}
    \begin{cases}
      T_n^{-\sigma+\eta}\mathcal B(T_n), & \sigma_0\le\sigma\le0,\\[2pt]
      T_n^\eta\mathcal B(T_n), & 0\le\sigma\le\kappa.
    \end{cases}
  \end{equation}

  Integrating \eqref{eq:quotient-bounds}, the interval $[\sigma_0,0]$ contributes
  \begin{align*}
    \int_{\sigma_0}^{0}T_n^{-\sigma+\eta}\mathcal B(T_n)d\sigma
    &=T_n^\eta\mathcal B(T_n)\int_{\sigma_0}^{0}T_n^{-\sigma}d\sigma  \\
    &\ll_\varepsilon T_n^\eta\mathcal B(T_n)T_n^{-\sigma_0}
    =T_n^{\frac{1}{2}-\varepsilon+\eta}\mathcal B(T_n),
  \end{align*}
  since $-\sigma_0=\frac{1}{2}-\varepsilon$, while the interval $[0,\kappa]$ has bounded length and contributes
  \[
    \ll_{\kappa,\eta}T_n^\eta\mathcal B(T_n),
  \]
  which is smaller. Hence
  \[
    \int_{\sigma_0}^{\kappa}
    \left|\frac{\zeta(2\sigma+2iT_n)}{\zeta(\sigma+iT_n)}\right|d\sigma
    \ll_{\varepsilon,\kappa,\eta}T_n^{\frac{1}{2}-\varepsilon+\eta}\mathcal B(T_n)
    =T_n^{\frac{1}{2}-\varepsilon+\eta+o(1)}.
  \]
  This proves \eqref{eq:H4-quant}, and dividing by $T_n^2$ gives \eqref{eq:H4-proved}. The estimate with $-iT_n$ follows from $\zeta(\overline{s})=\overline{\zeta(s)}$.
\end{proof}

\bigskip\bigskip

\begin{lemma}\label{lem:vertical}
  On the line $\Re s=\sigma_0=-\frac{1}{2}+\varepsilon$,
  \begin{equation}\label{eq:vertical-bound}
    G(s)x^{s-\frac{1}{2}}
    \ll_\varepsilon x^{-1+\varepsilon}|t|^{-\frac32-\varepsilon}\log(|t|+3)
    ,\quad s=\sigma_0+it,\ |t|\ge2.
  \end{equation}
  Consequently,
  \begin{equation}\label{eq:vertical-integral}
    \int_{\sigma_0-i\infty}^{\sigma_0+i\infty}G(s)x^{s-\frac{1}{2}}ds
    =O_\varepsilon(x^{-1+\varepsilon}).
  \end{equation}
\end{lemma}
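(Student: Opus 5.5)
We bound each factor of $G(s)x^{s-\frac12}=F'(s)(s-\frac12)^{-2}x^{s-\frac12}$ separately on the line $s=\sigma_0+it$, $|t|\ge2$. The trivial factors are immediate: $|x^{s-\frac12}|=x^{\sigma_0-\frac12}=x^{-1+\varepsilon}$ and $|s-\frac12|^{-2}\asymp |t|^{-2}$. It therefore suffices to prove
\[
  F'(\sigma_0+it)\ll_\varepsilon |t|^{\frac12-\varepsilon}\log(|t|+3),
\]
since $|t|^{-2}\cdot|t|^{\frac12-\varepsilon}=|t|^{-\frac32-\varepsilon}$. Integrating \eqref{eq:vertical-bound} over $|t|\ge2$ then converges because $\frac32+\varepsilon>1$. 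The bounded piece $|t|\le2$ is harmless, because $\sigma_0<\frac12$ keeps us away from the pole at $s=\frac12$ and the zeros of $\zeta$, and $G$ is continuous there. Together these give \eqref{eq:vertical-integral}.

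To bound $F'$ we apply the functional equation to both $\zeta(2s)$ and $\zeta(s)$, moving everything into the region of absolute convergence. Writing
\[
  F(s)=\frac{\chi(2s)}{\chi(s)}\cdot\frac{\zeta(1-2s)}{\zeta(1-s)},
\]
we note that on $\Re s=\sigma_0$ we have $\Re(1-2s)=2-2\varepsilon>1$ and $\Re(1-s)=\frac32-\varepsilon>1$. Hence $\zeta(1-2s)$, $1/\zeta(1-s)$ and their derivatives are all $O_\varepsilon(1)$ there, by absolute convergence of the relevant Dirichlet series (for example $\zeta'/\zeta$ is bounded). By \eqref{eq:chi-size},
\[
  |\chi(2s)/\chi(s)|\asymp |2t|^{\frac12-2\sigma_0}|t|^{\sigma_0-\frac12}\asymp |t|^{-\sigma_0}=|t|^{\frac12-\varepsilon}.
\]

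Next we differentiate logarithmically:
\[
  \frac{F'}{F}(s)=2\frac{\chi'}{\chi}(2s)-\frac{\chi'}{\chi}(s)-2\frac{\zeta'}{\zeta}(1-2s)+\frac{\zeta'}{\zeta}(1-s).
\]
The two $\zeta'/\zeta$ terms are $O_\varepsilon(1)$. Stirling's formula applied to $\chi(s)=\pi^{s-1/2}\Gamma((1-s)/2)/\Gamma(s/2)$ gives $\chi'/\chi(s)=-\log(|t|/2\pi)+O(1/|t|)$ uniformly in the strip, so $F'/F\ll\log(|t|+3)$. Multiplying by $|F|\ll|t|^{\frac12-\varepsilon}$ gives the required bound for $F'$.

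The only real obstacle is the uniform asymptotic $\chi'/\chi(\sigma+it)\ll\log|t|$ in a fixed vertical strip, which \eqref{eq:chi-size} alone does not state. We would derive it from the digamma expansion $\psi(z)=\log z+O(1/|z|)$, valid away from the negative real axis, applied to both $\Gamma$-factors. Alternatively, to avoid differentiating asymptotics, we can apply Cauchy's estimate to $F$ on a disc of radius $\frac{\varepsilon}{2}$ around $s$. On that disc we still have $\Re(1-2w)>1+\varepsilon$ and $\Re(1-w)>1$, and the $\chi$-ratio bound gives $F\ll|t|^{\frac12-\varepsilon/2}$. This costs a factor $|t|^{\varepsilon/2}$, which is acceptable if we first replace $\varepsilon$ by a slightly smaller value, though it loses the clean $\log$ factor. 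The primary route is the digamma one.
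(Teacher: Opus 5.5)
Your primary route is correct and is essentially the paper's proof: you bound $F$ on $\Re s=\sigma_0$ via the functional equation for both $\zeta(2s)$ and $\zeta(s)$, with $\zeta(1-2s)$ and $1/\zeta(1-s)$ bounded because $\Re(1-2s),\Re(1-s)\ge\frac32-\varepsilon>1$. You then control $F'/F$ by $\chi'/\chi\ll\log(|t|+3)$ and the boundedness of $\zeta'/\zeta$ there, which is exactly the paper's argument. Your fallback Cauchy-estimate variant would need a disc radius small in terms of $\frac12-\varepsilon$, e.g.\ $\frac12\min(\varepsilon,\frac12-\varepsilon)$, rather than $\frac{\varepsilon}{2}$, since for $\varepsilon\ge\frac13$ the latter no longer keeps $\Re(1-w)>1$; the primary route does not depend on this.
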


\begin{proof}
  Let $s=\sigma_0+it$ with $|t|\ge2$.
  Since
  \[
    1-s=\frac32-\varepsilon-it
    \qquad\text{and}\qquad
    1-2s=2-2\varepsilon-2it
  \]
  lie in the half-plane $\Re z\ge\frac32-\varepsilon>1$, the functions $\zeta$, $1/\zeta$, and $\zeta'/\zeta$ are bounded at these points, with constants depending only on $\varepsilon$. By the functional equation and \eqref{eq:chi-size},
  \[
    |\zeta(s)|=|\chi(s)|\,|\zeta(1-s)|\asymp_\varepsilon |t|^{1-\varepsilon},
    \qquad
    |\zeta(2s)|=|\chi(2s)|\,|\zeta(1-2s)|
    \ll_\varepsilon |t|^{\frac{1}{2}-2\sigma_0}
    =|t|^{\frac32-2\varepsilon},
  \]
  and hence
  \begin{equation}\label{eq:F-left-bound}
    F(s)\ll_\varepsilon |t|^{\frac{1}{2}-\varepsilon}.
  \end{equation}

  We also need a bound for $F'(s)$.
  By the logarithmic derivative of the functional equation, we have
  \[
    \frac{\zeta'}{\zeta}(z)
    =\frac{\chi'}{\chi}(z)-\frac{\zeta'}{\zeta}(1-z).
  \]
  By Stirling's formula, $\chi'(z)/\chi(z)\ll\log(|\Im z|+3)$ in any fixed vertical strip, and $\zeta'/\zeta(1-z)$ is bounded when $\Re(1-z)\ge\frac32-\varepsilon$. Applying this with $z=s$ and $z=2s$ gives
  \[
    \frac{\zeta'}{\zeta}(s)\ll_\varepsilon\log(|t|+3),
    \qquad
    \frac{\zeta'}{\zeta}(2s)\ll_\varepsilon\log(|t|+3).
  \]
  Since
  \[
    \frac{F'}{F}(s)
    =2\frac{\zeta'}{\zeta}(2s)-\frac{\zeta'}{\zeta}(s),
  \]
  \eqref{eq:F-left-bound} gives
  \begin{equation}\label{eq:Fprime-left-bound}
    F'(s)\ll_\varepsilon |t|^{\frac{1}{2}-\varepsilon}\log(|t|+3).
  \end{equation}
  Finally, $|x^{s-\frac{1}{2}}|=x^{\sigma_0-\frac{1}{2}}=x^{-1+\varepsilon}$ and $|s-\frac{1}{2}|^{-2}\ll |t|^{-2}$.
  Together with $G(s)=F'(s)/(s-\frac{1}{2})^2$, this proves \eqref{eq:vertical-bound}.

  The right-hand side of \eqref{eq:vertical-bound} is integrable over $|t|\ge2$, and on the segment $|t|\le2$ the integrand is $O_\varepsilon(x^{-1+\varepsilon})$. Hence \eqref{eq:vertical-integral} follows.
\end{proof}

\bigskip\bigskip

\begin{lemma}\label{lem:horizontal}
  Let $T_n$ be the sequence from Lemma~\ref{lem:H4-from-RH}. Then, for each fixed $x>1$, the integrals of $G(s)x^{s-\frac{1}{2}}$ over the horizontal segments from $\sigma_0\pm iT_n$ to $\kappa\pm iT_n$ tend to $0$ as $n\to\infty$.
\end{lemma}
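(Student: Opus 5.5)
The plan is to bound the integrand pointwise on the horizontal segment and then integrate. Write $G(s)x^{s-\frac12}=F'(s)(s-\frac12)^{-2}x^{s-\frac12}$. For $s=\sigma\pm iT_n$ with $\sigma_0\le\sigma\le\kappa$, we have $|x^{s-\frac12}|\le x^{\kappa-\frac12}$, since $x$ is fixed. We also have $|s-\frac12|^{-2}\le T_n^{-2}$. So the integral over each segment is at most
\[
x^{\kappa-\frac12}\,T_n^{-2}\int_{\sigma_0}^{\kappa}|F'(\sigma\pm iT_n)|\,d\sigma,
\]
and it suffices to show that this integral of $|F'|$ is $o(T_n^2)$.

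Lemma~\ref{lem:H4-from-RH} controls $F$ itself rather than $F'$, so the main step is to pass from $F$ to $F'$. I would write $F'=F\cdot(F'/F)$ with
\[
\frac{F'}{F}(s)=2\frac{\zeta'}{\zeta}(2s)-\frac{\zeta'}{\zeta}(s),
\]
and bound $F'/F$ on the chosen horizontal lines.

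\begin{itemize}
\item For $\zeta'/\zeta(2s)$: when $\Re(2s)\ge 1+\delta$ it is bounded. When $\Re(2s)$ is near or left of $1$, the height is $2T_n$, which we did not choose. So instead of a logarithmic-derivative bound, I would avoid it.
\item The cleaner route is Cauchy's estimate on $F$. Take the quantitative bound \eqref{eq:H4-quant}, or rather the pointwise bounds \eqref{eq:quotient-bounds} behind it. These give $|F(\sigma+iT_n)|\ll T_n^{-\sigma+\eta+o(1)}$ (or $T_n^{\eta+o(1)}$), but only on the line $\Im s=T_n$.
\item Cauchy's estimate needs $F$ on a small disc around the line, which meets $\zeta(s)$ near zeros. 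Titchmarsh's Theorem~14.16 really gives the lower bound on a short neighbourhood: under RH, for $t$ in $[T,T+1]$ at distance $\gg 1/\log T$ from every ordinate, one has $\log|\zeta(\sigma+it)|\ge -C\log t/\log\log t$.
\item So I would pick $T_n$ so that the whole band $|\Im s-T_n|\le c/\log T_n$ avoids zero ordinates. This is possible, since there are $O(\log T)$ zeros in $[T,T+1]$. The bound \eqref{eq:quoted-horizontal-bound} then holds throughout the band.
\item The numerator bounds and the functional-equation estimates in the proof of Lemma~\ref{lem:H4-from-RH} are uniform in such a band. Hence $|F(z)|\ll T_n^{\frac12-\varepsilon+\eta+o(1)}$ for $z$ in the band with $\sigma_0-\delta\le\Re z\le\kappa+\delta$. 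Here $\delta$ is chosen small, keeping $\Re z>-\frac12$ so that the left-side estimates are unchanged.
\end{itemize}

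Cauchy's inequality on discs of radius $r=c/\log T_n$ then gives
\[
|F'(\sigma\pm iT_n)|\ll r^{-1}\max|F|\ll T_n^{\frac12-\varepsilon+\eta+o(1)}.
\]
Integrating over $[\sigma_0,\kappa]$ and dividing by $T_n^2$ gives a bound $\ll_x T_n^{-\frac32-\varepsilon+\eta+o(1)}\to0$. The segment at $-iT_n$ follows by conjugation.

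The expected obstacle is exactly this upgrade of Lemma~\ref{lem:quoted-horizontal} from a single line to a thin band. If that upgrade were unavailable, the fallback would be to use $\zeta'/\zeta(s)=\sum_{|\gamma-t|\le1}(s-\rho)^{-1}+O(\log t)$ under RH, together with the spacing choice of $T_n$, to get $\zeta'/\zeta(s)\ll\log^2T_n$ on the line, and similarly for $\zeta'/\zeta(2s)$. For $\zeta'/\zeta(2s)$ the poles from zeros at height near $2T_n$ must also be avoided, so $T_n$ would be chosen to satisfy both spacing conditions at once. This is possible since the excluded sets have measure $O(\log T\cdot 1/\log T)$, which is small after adjusting constants. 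Either way the loss is only a power of $\log T_n$, which is absorbed by the factor $T_n^{-2}$.
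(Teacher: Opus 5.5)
There is a genuine gap. Your reduction to showing $\int_{\sigma_0}^{\kappa}|F'(\sigma\pm iT_n)|\,d\sigma=o(T_n^2)$ is fine, but the step meant to deliver it is not established.

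You assert that Titchmarsh's Theorem~14.16 gives $\log|\zeta(\sigma+it)|\ge -C\log t/\log\log t$ at \emph{every} $t\in[T,T+1]$ at distance $\gg 1/\log T$ from all ordinates. That is not what the theorem or its proof gives. Take the RH inequality $\log|\zeta(s)|\ge\sum_{|t-\gamma|<1/\log\log t}\log|t-\gamma|-O(\log t/\log\log t)$. RH only bounds the number of terms by $O(\log t/\log\log t)$. A separation of size $c/\log t$ only bounds each term below by $-\log\log t+O(1)$. Together these give just $|\zeta|^{-1}\le t^{C}$ with an unspecified $C$, which does not beat the factor $T_n^{-2}$.

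The proof of Theorem~14.16 is an averaging argument, which produces good $t$ on a set of large measure. To get a whole band you would have to intersect that set with your separation condition and compare $\log|t-\gamma|$ with $\log|T_n-\gamma|$ across the band. You do not supply that argument. It would also re-choose $T_n$, whereas the statement concerns the sequence of Lemma~\ref{lem:H4-from-RH}.

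Your fallback has the same defect. Lemma~\ref{lem:quoted-horizontal}, as quoted, gives only one admissible point per interval. So you cannot also impose separation conditions at heights $T_n$ and $2T_n$ while keeping the bound on $|F|$ that Lemma~\ref{lem:H4-from-RH} relies on.

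The missing idea is that no bound on $F'$ is needed at all. $F$ is holomorphic on the segment: under RH its only possible poles there are at $\sigma=\frac12$, and $\zeta(\frac12+iT_n)\neq0$ by the choice of $T_n$. So you can integrate by parts in $\sigma$, with $H(s)=x^{s-\frac12}(s-\frac12)^{-2}$:
\[
\int_{\sigma_0}^{\kappa}H(\sigma+iT_n)F'(\sigma+iT_n)\,d\sigma=\bigl[H(s)F(s)\bigr]_{\sigma_0+iT_n}^{\kappa+iT_n}-\int_{\sigma_0}^{\kappa}H'(\sigma+iT_n)F(\sigma+iT_n)\,d\sigma .
\]
On the segment, $H$ and $H'$ are both $O_x(T_n^{-2})$. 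At the endpoints, $F(\kappa+iT_n)\ll1$, and $F(\sigma_0+iT_n)\ll T_n^{\frac12-\varepsilon}$ by \eqref{eq:F-left-bound}. The remaining integral is $o(1)$ directly by \eqref{eq:H4-proved}.

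This is the paper's proof. Moving the derivative onto the explicit factor $H$ means only the bounds for $F$ on the line are needed, and Lemma~\ref{lem:H4-from-RH} already provides them. It also works for the sequence $T_n$ exactly as chosen there, with no band or spacing considerations.
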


\begin{proof}
  Recall $H(s)$ from \eqref{eq:H-def}, so that $G(s)x^{s-\frac{1}{2}}=H(s)F'(s)$.
  On the upper line segment $s=\sigma+iT_n$, integration by parts gives
  \begin{equation}\label{eq:IBP-horizontal}
    \int_{\sigma_0}^{\kappa}H(\sigma+iT_n)F'(\sigma+iT_n)d\sigma
    =[H(s)F(s)]_{\sigma_0+iT_n}^{\kappa+iT_n}
    -\int_{\sigma_0}^{\kappa}H'(\sigma+iT_n)F(\sigma+iT_n)d\sigma.
  \end{equation}
  For $\sigma\in[\sigma_0,\kappa]$ we have $|\sigma+iT_n-\frac{1}{2}|\asymp T_n$. Since $x$ is fixed,
  \begin{equation}\label{eq:H-Hprime-horizontal}
    |H(\sigma+iT_n)|\ll_{x,\varepsilon,\kappa}T_n^{-2},
  \end{equation}
  and by \eqref{eq:Hprime},
  \begin{equation}\label{eq:Hprime-horizontal}
    |H'(\sigma+iT_n)|\ll_{x,\varepsilon,\kappa}(1+L)T_n^{-2}.
  \end{equation}

  We first handle the endpoint terms in \eqref{eq:IBP-horizontal}. At the right endpoint $\Re s=\kappa>1$, both $\zeta(2s)$ and $1/\zeta(s)$ are bounded, so $F(\kappa+iT_n)\ll_\kappa1$ and
  \[
    H(\kappa+iT_n)F(\kappa+iT_n)
    \ll_{x,\kappa}T_n^{-2}=o(1).
  \]
  At the left endpoint $s=\sigma_0+iT_n$, \eqref{eq:F-left-bound} gives $F(\sigma_0+iT_n)\ll_\varepsilon T_n^{\frac{1}{2}-\varepsilon}$, and $|H(\sigma_0+iT_n)|\ll_{x,\varepsilon}x^{-1+\varepsilon}T_n^{-2}$, so
  \[
    H(\sigma_0+iT_n)F(\sigma_0+iT_n)
    \ll_{x,\varepsilon}x^{-1+\varepsilon}T_n^{-\frac32-\varepsilon}=o(1).
  \]
  Thus the boundary term in \eqref{eq:IBP-horizontal} is $o(1)$.

  For the integral term, \eqref{eq:Hprime-horizontal} and Lemma~\ref{lem:H4-from-RH} give
  \begin{align*}
    \int_{\sigma_0}^{\kappa}|H'(\sigma+iT_n)F(\sigma+iT_n)|d\sigma
    &\ll_{x,\varepsilon,\kappa}(1+L)T_n^{-2}
    \int_{\sigma_0}^{\kappa}|F(\sigma+iT_n)|d\sigma  \\
    &=(1+L)\,o(1),
  \end{align*}
  which tends to $0$ since $x$, and hence $L$, is fixed. This proves the claim for the upper segment; the lower segment is treated in the same way, using $\zeta(\overline{s})=\overline{\zeta(s)}$.
\end{proof}

\bigskip\bigskip

\begin{proof}[Proof of Theorem~\ref{thm:explicit}]
  Let $\mathcal R_n$ be the rectangle with vertical sides $\Re s=\kappa$ and $\Re s=\sigma_0$, and horizontal sides $\Im s=\pm T_n$, where $T_n$ is the sequence from Lemma~\ref{lem:H4-from-RH}. By the residue theorem,
  \begin{equation}\label{eq:truncated-contour}
    \frac1{2\pi i}\int_{\kappa-iT_n}^{\kappa+iT_n}G(s)x^{s-\frac{1}{2}}ds
    =P(L)+\sum_{\substack{\rho:\ |\Im\rho|<T_n}}R_\rho(x)
    +\frac1{2\pi i}\int_{\sigma_0-iT_n}^{\sigma_0+iT_n}G(s)x^{s-\frac{1}{2}}ds+o(1),
  \end{equation}
  where the $o(1)$ term, as $n\to\infty$ with $x$ fixed, is the contribution of the two horizontal sides, by Lemma~\ref{lem:horizontal}.

  Let $n\to\infty$. The left-hand side of \eqref{eq:truncated-contour} tends to the integral in \eqref{eq:inversion}, by absolute convergence on $\Re s=\kappa$. The vertical integral is $O_\varepsilon(x^{-1+\varepsilon})$ by Lemma~\ref{lem:vertical}, and the sum over the zeros converges absolutely by \textup{H3} and \eqref{eq:Rrho}. Therefore
  \[
    f(x)=P(L)+\sum_\rho R_\rho(x)+O_\varepsilon(x^{-1+\varepsilon}),
  \]
  and by \eqref{eq:zero-pairing} this is
  \[
    f(x)=P(L)+L\Phi(L)-\Psi(L)+O_\varepsilon(x^{-1+\varepsilon}),
  \]
  which proves Theorem~\ref{thm:explicit}.
\end{proof}

\bigskip\bigskip

\subsection{Proof of the Corollary}

By Theorem~\ref{thm:explicit},
\[
  f(x)=P(L)+L\Phi(L)-\Psi(L)+O_\varepsilon(x^{-1+\varepsilon}),
  \qquad L=\log x.
\]
Since $\zeta(\frac{1}{2})<0$,
\[
  P(L)=\frac{L^3}{12|\zeta(\frac{1}{2})|}+\alpha L+\beta.
\]
By \textup{H3}, the functions $\Phi$ and $\Psi$ are bounded. Hence there exist constants $M_\Phi,M_\Psi<\infty$ such that
\[
  |\Phi(t)|\le M_\Phi,
  \qquad
  |\Psi(t)|\le M_\Psi
  ,\quad t\in\mathbb R.
\]
It follows that
\[
  f(x)=\frac{L^3}{12|\zeta(\frac{1}{2})|}+O(L)+O_\varepsilon(x^{-1+\varepsilon}).
\]
Since $x^{-1+\varepsilon}=o(1)$ and $L=o(L^3)$ as $x\to\infty$, we obtain
\[
  f(x)\sim \frac{(\log x)^3}{12|\zeta(\frac{1}{2})|}.
\]
In particular, $f(x)\to+\infty$, and so $f(x)>0$ for all sufficiently large $x$. This proves Corollary~\ref{cor:positivity}.

\bigskip
\section*{Acknowledgements}
The author would like to thank his supervisor, Professor Ade Irma Suriajaya, for her kind advice.

\bigskip
\nocite{*}
\bibliographystyle{amsplain}
\bibliography{references}
\end{document}